\documentclass[reqno]{amsart}

\usepackage[foot]{amsaddr} 

\usepackage[pagewise]{lineno}\nolinenumbers

\numberwithin{equation}{section}
\usepackage{latexsym}
\usepackage{amsmath,hyperref}
\usepackage{mathscinet}
\usepackage{enumitem}
\usepackage{amssymb}
\usepackage{mathrsfs}
\usepackage{graphicx,colordvi,psfrag}
\usepackage{upgreek}
\usepackage{ifthen}
\usepackage{color}

\usepackage[T1]{fontenc}
\usepackage[latin1]{inputenc}

\numberwithin{equation}{section}

\newtheorem{definition}{Definition}[section]
\newtheorem{theorem}[definition]{Theorem}
\newtheorem{lemma}[definition]{Lemma}

\newtheorem{remark}[definition]{Remark}

\usepackage{latexsym}
\usepackage{amsmath}
\usepackage{amssymb}

\newcommand{\R}{{\mathbb R}}
\newcommand{\N}{{\mathbb N}}

\providecommand{\norm}[1]{\| #1 \|}

\allowdisplaybreaks 

\title[]{Continuous dependence results for quasilinear evolution equations}
\author[F. Cellarosi]{Francesco Cellarosi}
\author[A. Dutta]{Anirban Dutta}
\author[G. Mazzone]{Giusy Mazzone}
\date{\today}
\address{Queen's University}
\email{francesco.cellarosi@queensu.ca}
\email{21ad53@queensu.ca}
\email{giusy.mazzone@queensu.ca}

\begin{document}

\subjclass[2020]{
35K59,   
35K15,   
35K90,  
35Q30, 
35Q35, 
76A05. 
}

\keywords{
Quasilinear parabolic equations, continuous dependence of solutions, Navier-Stokes equations, Non-Newtonian fluid models, viscosity limits}

\begin{abstract}
We study continuous dependence of solutions to quasilinear evolution equations of parabolic-type in the framework of maximal $L^p$-regularity. For equations of the form
\[
\frac{d\phi}{dt} + A(t,\phi)\phi = f(t,\phi),
\]
we establish continuous dependence of strong solutions on initial data, 
and suitable approximations of the nonlinear operators $A$ and $f$. An important step for proving the main result is the fact that the maximal regularity constant of the operator $A(t,\phi)$, with $t$ and $\phi$ fixed, admits a uniform bound over compact subsets of the relevant Banach spaces. 

As an application, we consider a class of non-Newtonian fluid models with  a Carreau-type viscosity and mixed boundary conditions. We show that, as the nonlinear contribution in the viscosity vanishes and  the initial data converge, solutions of the non-Newtonian  fluid model converge to those of the classical Navier--Stokes equations.
\end{abstract}

\maketitle 

\section{Introduction}
The abstract theory of evolution equations of parabolic-type provides a powerful and flexible framework for the analysis of nonlinear time-dependent partial differential equations. Over the past decades, it has become one of the central tools for studying local and global well-posedness, continuous dependence of solutions on  parameters, and the construction of the associated semiflow in suitable phase spaces. Beyond existence and uniqueness, the same abstract approach allows for a systematic investigation of qualitative properties of the solutions, such as stability of equilibria, the structure of invariant manifolds near steady states, and the long-time asymptotic behavior of trajectories, including convergence to equilibrium (see \cite{Prussbook,PSZ09,Simonett}).

In this paper, we consider a quasilinear parabolic-type problem of the form
\begin{equation}\label{eq:main_problem}
\begin{cases}
\displaystyle
\frac{d\phi}{dt} + A(t,\phi)\phi = f(t,\phi), & t > t_{0}, \\[8pt]
\phi(t_{0}) = u_{0}\in X.
\end{cases}
\end{equation}
Here, for each fixed $t$ and $\phi$, the operator $A(t,\phi)$ is a (possibly unbounded) linear  operator on $X$,
while $f$ is a nonlinear operator on $X$. 
We refer to Section~\ref{sec:notation} for the precise statement of the assumptions imposed on $A$, $f$, and the initial datum $u_0$ (see conditions \ref{assumption_structure}--\ref{existence_reason4}). Under these assumptions, the local-in-time existence of strong solutions to \eqref{eq:main_problem} is already known; see, for instance, \cite[Theorem~3.1]{Bari}.

The main result of our paper is a proof of the continuous dependence of solutions to \eqref{eq:main_problem} upon the initial data, and suitable approximations of the operators $A$ and $f$, in the framework of maximal $L^p$-regularity (see Theorem \ref{main theorem1}). 
Our result extends considerably existing results obtained by K\"ohne, Pr\"uss, and Wilke in \cite{MR2643804} for the quasilinear equation \eqref{eq:main_problem}, where only continuous dependence upon the initial data was considered. 
In the context of semilinear equations, i.e., when the operator $A$ in \eqref{eq:main_problem} does not depend on the solution, a similar continuous dependence result (upon data and forces) was obtained by the second author of this paper in \cite{anirban}.

A key ingredient of our analysis is an auxiliary result on the uniform boundedness of the maximal regularity constants for the operator $A(t,u)$ for each fixed $t$ and $u$ (see Lemma~\ref{Introductory_lemma}). In general, the maximal regularity constant associated with $A(t,u)$ may depend on both $t$ and $u$. We prove that, if $t$ varies in a bounded subset of $[t_0,\infty)$ and $u$ varies in a  compact subset of a suitable Banach space, then these maximal regularity constants remain uniformly bounded.

Beyond its intrinsic analytical interest, the abstract framework considered here has a wide range of applications. Numerous  mathematical models arising in physical applications  
can be reformulated as quasilinear parabolic-type evolution equations. These include models for viscous, incompressible, and non-Newtonian fluids, Stefan problems with surface tension, two-phase Navier--Stokes systems, and geometric evolution equations such as the mean curvature flow, the surface diffusion flow, and the Willmore flow  (see e.g.\cite{intro_pruss}, \cite{Prussbook}, \cite{intro_simonett}). Continuous dependence results on data, forcing and parameters provide valuable information on the mathematical models as well as on the physical description of their solutions. 
Our abstract approach thus provides a unifying way 
to treat different physical problems within a common functional analytic setting.

In Section~\ref{application}, we apply our abstract continuous dependence result to a class of generalized Newtonian models describing incompressible, viscous, non-Newtonian fluid flows arising in hemodynamics. These non-Newtonian fluids are characterized by the following constitutive equation for the Cauchy stress tensor: 
\[
\mathcal{S}_n(v,\pi)
= 2 \mu_n(|D(v)|^2) D(v) - \pi \mathbf{I},
\]
where $D(v) = \frac{1}{2}\big(\nabla v + (\nabla v)^T\big)$ is the rate of deformation tensor, and the viscosity coefficient $\mu_n$ satisfies the {\em Carreau-type law}
\[
\mu_n(s)= \mu_\infty + \eta_n (1+s)^{\frac{d-2}{2}},\qquad \text{for }s \ge 0,
\]
with $\mu_\infty>0$ a constant and $\{\eta_n\}_{n\in\mathbb{N}}$ a sequence of non-negative real numbers such that $\eta_n \to 0$ as $n\to\infty$.

In the special case $\eta_n \equiv 0$, the viscosity becomes constant and we retrieve the usual constitutive equation of a viscous, incompressible, Newtonian fluid. The corresponding equations of motion would then coincide with the classical incompressible Navier--Stokes equations. 

On the boundary, we impose mixed boundary conditions. Specifically, we impose conditions of {\em no-slip} and {\em pure slip} on different portions of the fluid's boundary.

We prove that, as $\eta_n \to 0$ and the initial fluid velocity $v_0^n$ converge (in an appropriate topology),
solutions of the non-Newtonian fluid model (see equation \eqref{eq:motion}) converge to the solution of the Navier--Stokes system in the setting of maximal $L^p$-regularity. The proof relies on reformulating the fluid equations as  a quasilinear evolution equation in an appropriate Banach space setting (see equations \eqref{eq:motion1} and \eqref{evolution-equation-application}), and then applying our abstract continuous dependence result given in Theorem \ref{main theorem1}.

\section{Notation and Preliminaries}\label{sec:notation}

Let \( X_0 \) and \( X_1 \) be Banach spaces such that the embedding \( X_1 \hookrightarrow X_0 \) is continuous and the image of \( X_1 \) is dense in \( X_0 \). Let \( p \in (1, \infty) \), and let \( A \in \mathcal{L}(X_1, X_0) \) be a closed (possibly unbounded) linear operator.

Let \( t_0,\;T > 0 \) and consider the interval \( J=[t_0,t_0+T] \). 
Let \( f: J \to X_0 \) be a given function. We consider the initial value problem:
\begin{equation}\label{fundamental_equation}
\begin{cases}
\displaystyle
\frac{du}{dt} + Au = f(t), & t_0<t<t_0+T, \\[8pt]
u(t_0) = 0.
\end{cases}
\end{equation}

We recall the following definition from \cite[Definition 1.1.]{Bari}.
\begin{definition}[Maximal \( L^p \)-regularity]\label{maximal_L^p_regularity}
The operator \( A \) is said to have the property of \emph{maximal \( L^p \)-regularity on the interval \( J \)} if and only if  for every \( f \in L^p(J; X_0) \) there exists a unique \( u \in W^{1,p}(J; X_0) \cap L^p(J; X_1) \)  satisfying \eqref{fundamental_equation} in the \( L^p \)-sense.
\end{definition}

By the closed graph theorem, it follows that there exists a  constant \( C > 0 \) such that the solution \( u \) to \eqref{fundamental_equation} satisfies the estimate
\begin{equation}\label{Maximal_inequality}
   \|u\|_{W^{1,p}(J; X_0) \cap L^{p}(J; X_1)} \leq C \|f\|_{L^{p}(J; X_0)}.
\end{equation}
Throughout the paper, we shall refer to the infimum of all such \(C\)'s satifying \eqref{Maximal_inequality} as \emph{the maximal regularity constant of \(A
\) on $J$}, denoted by $C(A;J)$.
It is clear that  $T\mapsto C(A;[t_0,t_0+T])$ is a non-decreasing function. 
Let \( I \subseteq \mathbb{R} \) be an interval. We define the function spaces
\begin{equation}\label{def_E1_and_E0_space}
    E_1(I) := W^{1,p}(I; X_0) \cap L^p(I; X_1), \quad \text{and} \quad E_0(I) := L^p(I; X_0),
\end{equation}
equipped with the norms
\begin{equation}\label{def_E1_and_E0}
    \| \cdot \|_{E_1(I)} := \| \cdot \|_{W^{1,p}(I; X_0) \cap L^p(I; X_1)}, \quad \text{and} \quad \| \cdot \|_{E_0(I)} := \| \cdot \|_{L^p(I; X_0)}.
\end{equation}

Let $X_p := (X_0, X_1)_{1-\frac{1}{p},p}$ be the (real) interpolation space. One can show that the norm on $X_p$ can be expressed as 
\begin{equation}\label{def_X_p}
\norm{f}_{X_p} := \inf \left\{ \norm{F}_{E_1([0,\infty))} \colon f = F(0) \right\},    
\end{equation}
see e.g. Chapter $1$ in \cite{interpolation_theory}. 
We also recall that, for every $u \in X_0 \cap X_1$, the interpolation inequality
\begin{align}\label{eq:interpolation-X_alpha}
    \|u\|_{X_p} \leq C_p \|u\|_{X_0}^{\frac{1}{p}} \|u\|_{X_1}^{1-\frac{1}{p}}
\end{align}
holds, where the constant $C_p > 0$ depends only on $p$, see e.g. Corollary $1.2.7$ in \cite{Analytic-semigroups-Lunardi}. 

Now, for each $n\in\mathbb{N} \cup \{\infty\}$, we consider a quasilinear problem
\begin{equation}\label{main_equation}
\begin{cases}
\displaystyle
\frac{d\phi_{n}}{dt} + A_n(t,\phi_{n})\phi_{n}= f_{n}(t, \phi_{n}), \qquad & t>t_{0}, \\[10pt] 
\phi_{n}(t_{0})= u^{n}_{0},  
\end{cases}
\end{equation}
where $u_0^n\in X_p$ and we impose the following assumptions on \( A_n \) and \( f_n \): 
\begin{enumerate}[label=\rm{(I.\alph*)}, ref=\rm{(I.\alph*)}]
\item \label{assumption_structure}
(\textit{Continuity assumptions}).  
The mapping
\[
A_n : [t_0,\infty) \times X_p \to \mathcal{L}(X_1,X_0)
\]
is continuous, and the function
\[
f_n : [t_0,\infty) \times X_p \to X_0
\]
is of \emph{Carath\'eodory type}, that is:
\begin{enumerate}[
    label=\rm{(I.\alph{enumi}.\roman*)},
    ref=\rm{(I.\alph{enumi}.\roman*)}
]
    \item \label{assumption_structure1}
    for each fixed \( u \in X_p \), the mapping
    \[
    t \longmapsto f_n(t,u)
    \]
    is measurable;

    \item \label{assumption_structure2}
    for almost every \( t \ge t_0 \), the mapping
    \[
    u \longmapsto f_n(t,u)
    \]
    is continuous.
\end{enumerate}

\item \label{existence_reason1}
(\textit{Lipschitz continuity of \( A_n \) in the second variable}).  
For every \( R>0 \), there exists a constant \( L_R^n>0 \) such that for all
\( u_1,u_2 \in X_p \), \( v \in X_1 \), and
\( t \in [t_0,t_0+R] \) with
\(
\|u_1\|_{X_p} \le R\),
 \(\|u_2\|_{X_p} \le R,
\)
the estimate
\[
\| A_n(t,u_1)v - A_n(t,u_2)v \|_{X_0}
\le
L_R^n \, \|u_1-u_2\|_{X_p}\, \|v\|_{X_1}
\]
holds.

\item \label{existence_reason2}
(\textit{Integrability of \( f_n(\cdot,0) \)}).  
The function
\(
f_n(\cdot,0)\) belongs to \(
L^p_{\mathrm{loc}}(t_0,\infty;X_0).
\)

\item \label{existence_reason3}
(\textit{Lipschitz continuity of \( f_n \) in the second variable}).  
For every \( R>0 \), there exists a function
\(
\Psi_R^n \in L^p_{\mathrm{loc}}([t_0,\infty))
\)
such that for all \( u_1,u_2 \in X_p \) with
\(
\|u_1\|_{X_p} \le R\),
 \(\|u_2\|_{X_p} \le R,
\)
the inequality
\[
\| f_n(t,u_1) - f_n(t,u_2) \|_{X_0}
\le
\Psi_R^n(t)\, \|u_1-u_2\|_{X_p}
\]
holds for almost every \( t \ge t_0 \).

\item \label{existence_reason4}
(\textit{Maximal \( L^p \)-regularity of \( A_n \)}).  
For each \( t \in [t_0,\infty) \) and each \( v \in X_p \),
the operator \( A_n(t,v) \) enjoys maximal
\( L^p \)-regularity on any compact interval.
\end{enumerate}

We note that assumptions \ref{assumption_structure}--\ref{existence_reason4} guarantee local existence and uniqueness of the solution to \eqref{main_equation} for all $n\in\N\cup\{\infty\}$ (see~\cite[Theorem 3.1]{Bari}).

Our goal is to show that for any compact interval $I\subset[t_0,\infty)$, we have $\phi_n\to\phi_\infty$ in $E_1(I)$ as $n\to \infty$. To this end, we will make further assumption on the convergence of $A_n, f_n, u_0^n$ to $A_\infty, f_\infty, u_0^\infty$  respectively as $n\to\infty$.

\section{Main abstract theorem on continuous dependence for strong solutions}\label{sec:cont_dep}

Before proving our main theorem, we first establish a lemma that plays a crucial role in the analysis. Although the maximal regularity constant associated with \( A(t, u) \) may, in general, depend on both \( t \) and \( u \), the following lemma guarantees that if \( (t, u) \) lies in a compact subset of \( [t_0, \infty) \times X_p \), then these constants remain uniformly bounded. As a result, a single uniform constant will suffice for all operators \( A(t, u) \) within this set.

\begin{lemma}\label{Introductory_lemma}
Let  \( A_\infty: [t_0, \infty) \times X_p \rightarrow \mathcal{L}(X_1, X_0) \) be a continuous mapping that satisfies assumptions \ref{existence_reason1} and \ref{existence_reason4}.
Fix $R>0$. Let $t\in [t_0, t_0+R]$, and \( u \in X_p \), and denote by $c_{\mathrm{M}}(t,u):=C(A_{\infty}(t,u);[t_0, t_0+R])$ the maximal regularity constant of $A_\infty(t,u)$ on $[t_0,t_0+R]$.

Then the mapping 
\[
[t_0, t_0+R] \times X_p \ni (t,u) \mapsto c_{\mathrm{M}}(t,u)\in \R,
\]
is continuous.
\end{lemma}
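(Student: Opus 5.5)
The plan is a perturbation argument for maximal $L^p$-regularity of operators that do not depend on time, combined with the continuity of $A_\infty$ as a map into $\mathcal{L}(X_1,X_0)$. Fix $J=[t_0,t_0+R]$ and let ${}_0E_1(J)$ denote the closed subspace of $E_1(J)$ of functions vanishing at $t_0$. The key intermediate claim is the following. Let $A,B\in\mathcal{L}(X_1,X_0)$, suppose $B$ has maximal $L^p$-regularity on $J$ with constant $C(B;J)$, and set $\delta:=\|A-B\|_{\mathcal{L}(X_1,X_0)}$ with $C(B;J)\,\delta<1$. Then $A$ also has maximal $L^p$-regularity on $J$, and
\[
C(A;J)\le \frac{C(B;J)}{1-C(B;J)\,\delta}.
\]
First I note that the infimum defining $C(B;J)$ is itself an admissible constant in \eqref{Maximal_inequality}, since the inequality is preserved under limits of constants. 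I also note that $\|u\|_{L^p(J;X_1)}\le\|u\|_{E_1(J)}$.

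To prove the claim, fix $f\in E_0(J)$ and define $\Phi:{}_0E_1(J)\to{}_0E_1(J)$ by letting $\Phi(u)$ be the unique solution of
\[
w'+Bw=f+(B-A)u,\qquad w(t_0)=0.
\]
This is well defined because $\|(B-A)u\|_{E_0(J)}\le\delta\|u\|_{L^p(J;X_1)}\le\delta\|u\|_{E_1(J)}$, so the right-hand side lies in $E_0(J)$. By linearity and \eqref{Maximal_inequality} for $B$, we get
\[
\|\Phi(u)-\Phi(v)\|_{E_1(J)}\le C(B;J)\,\delta\,\|u-v\|_{E_1(J)},
\]
so $\Phi$ is a contraction. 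Its fixed point solves $u'+Au=f$, $u(t_0)=0$, which gives existence. For uniqueness, every solution in ${}_0E_1(J)$ of the problem with $A$ is a fixed point of $\Phi$, so it coincides with this one. Finally, the fixed point satisfies $\|u\|_{E_1}\le C(B;J)\bigl(\|f\|_{E_0}+\delta\|u\|_{E_1}\bigr)$, and rearranging gives the bound on $C(A;J)$.

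Now fix $(t,u)\in J\times X_p$ and set $B=A_\infty(t,u)$. By \ref{existence_reason4}, $c_{\mathrm M}(t,u)=C(B;J)$ is finite. By the continuity of $A_\infty$, for $(s,v)\to(t,u)$ we have
\[
\delta(s,v):=\|A_\infty(s,v)-A_\infty(t,u)\|_{\mathcal{L}(X_1,X_0)}\to0.
\]
Applying the claim with the roles of the operators in one order gives $c_{\mathrm M}(s,v)\le c_{\mathrm M}(t,u)/(1-c_{\mathrm M}(t,u)\delta)$ once $\delta$ is small. Applying it in the other order, with $A_\infty(s,v)$ as the base operator, gives
\[
c_{\mathrm M}(t,u)\le \frac{c_{\mathrm M}(s,v)}{1-c_{\mathrm M}(s,v)\,\delta},
\]
whenever $c_{\mathrm M}(s,v)\,\delta<1$. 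This condition holds by the first bound, since $c_{\mathrm M}(s,v)$ stays bounded near $(t,u)$ while $\delta\to0$. Rearranging the second inequality yields
\[
c_{\mathrm M}(s,v)\ge \frac{c_{\mathrm M}(t,u)}{1+c_{\mathrm M}(t,u)\,\delta}.
\]
Both the upper and the lower bound tend to $c_{\mathrm M}(t,u)$ as $\delta\to0$, which proves continuity. Only the continuity of $A_\infty$ is used here; \ref{existence_reason1} is not needed beyond that.

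The main point requiring care is the two-sided comparison. The lower bound must be obtained by applying the perturbation claim with base operator $A_\infty(s,v)$, whose constant is a priori unknown. This is why the upper bound has to be established first, so that $c_{\mathrm M}(s,v)$ is locally bounded. Besides this, one has to check that the fixed-point space ${}_0E_1(J)$ is complete and that the norm convention on $E_1$ dominates the $L^p(J;X_1)$-norm.
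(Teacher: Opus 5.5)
Your proof is correct and follows essentially the paper's route: both arguments rest on the perturbation bound $C(A;J)\le C(B;J)/(1-C(B;J)\|A-B\|_{\mathcal{L}(X_1,X_0)})$, applied in both directions. The paper gets this bound from the equation for $v_1-v_2$, with both solutions already supplied by \ref{existence_reason4}, rather than from a contraction. Your explicit local-boundedness step before swapping the roles of the two operators, and your remark that continuity of $A_\infty$ suffices without \ref{existence_reason1}, make rigorous what the paper covers with ``similarly''.
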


\begin{proof}
    Let us take $u_1$, $u_2 \in X_p$ and $t_1, t_2 \in [t_0, t_0+R]$. By \ref{existence_reason4}, we know that the operators \( A_\infty(t_1, u_1) \) and $A_\infty(t_2, u_2)$  have the property of maximal \( L^p \)-regularity on $[t_0, t_0+R]$. Therefore, for all $f\in E_0([t_0, t_0+R])$, there exists $v_1$, $v_2 \in E_1([t_0, t_0+R])$
    satisfying the equations
\begin{equation}\label{lemma_equation1}
\begin{cases}
\displaystyle
\frac{dv_1}{dt} + A_\infty(t_1,u_1)v_1= f(t), \qquad & t_0<t<t_{0}+R, \\[10pt]  
v_1(t_{0})= 0,  
\end{cases}
\end{equation}
\begin{equation}\label{lemma_equation2}
\begin{cases}
\displaystyle
\frac{dv_2}{dt} + A_\infty(t_2,u_2)v_2= f(t), \qquad & t_0<t<t_{0}+R, \\[10pt]  
v_2(t_{0})= 0,
\end{cases}
\end{equation}
in the \( L^p \)-sense, and
the inequalities
\begin{equation}\label{important_inequality1}
   \|v_1\|_{E_1([t_0, t_0+R])} \leq c_{\mathrm{M}}(t_1,u_1) \|f\|_{E_0([t_0, t_0+R])},
\end{equation}
\begin{equation}\label{important_inequality2}
   \|v_2\|_{E_1([t_0, t_0+R])} \leq c_{\mathrm{M}}(t_2,u_2) \|f\|_{E_0([t_0, t_0+R])}.
\end{equation}
We subtract equations~\eqref{lemma_equation1} and~\eqref{lemma_equation2} side by side and write 
\begin{equation}\label{lemma_equation3}
\begin{cases}
\displaystyle
\frac{d}{dt}(v_1-v_2) + A_\infty(t_1,u_1)(v_1-v_2)= F(t), \qquad & t_0<t<t_{0}+R, \\[10pt]  
(v_1-v_2)(t_{0})= 0,  
\end{cases}
\end{equation}
where $F(t):= A_\infty(t_2,u_2)v_2- A_\infty(t_1,u_1)v_2$. 

 First, we show that $F\in E_0([t_0, t_0+R])$. By using the triangle inequality and \ref{existence_reason1}, we get the following inequality:
\begin{equation}\label{estimate_F}
\begin{aligned}
    \norm{F}_{E_0([t_0, t_0+R])} &\leq \norm{A_\infty(t_2,u_2)v_2-A_\infty(t_1,u_2)v_2}_{E_0([t_0, t_0+R])}\\&\qquad+ \norm{A_\infty(t_1,u_2)v_2-A_\infty(t_1,u_1)v_2}_{E_0([t_0, t_0+R])} \\
    &\leq \norm{A_\infty(t_2,u_2)-A_\infty(t_1,u_2)}_{\mathcal{L}(X_1, X_0)}\norm{v_2}_{E_1([t_0, t_0+R])}\\&\qquad+
    L^\infty_R \norm{u_1-u_2}_{X_p}\norm{v_2}_{E_1([t_0, t_0+R])}.
\end{aligned}
\end{equation}
Now, combining \eqref{important_inequality2} and \eqref{estimate_F}, we get 
\begin{equation}
\begin{aligned}
     \norm{F}_{E_0([t_0, t_0+R])} &\leq \norm{A_\infty(t_2,u_2)-A_\infty(t_1,u_2)}_{\mathcal{L}(X_1, X_0)}c_{\mathrm{M}}(t_2,u_2)\norm{f}_{E_0([t_0, t_0+R])}\\&\qquad+
    L^\infty_R \norm{u_1-u_2}_{X_p}c_{\mathrm{M}}(t_2,u_2)\norm{f}_{E_0([t_0, t_0+R])}.
\end{aligned}
\end{equation}
Therefore, we have $F\in E_0([t_0, t_0+R])$. Since the operator \( A_\infty(t_1, u_1) \) has the property of maximal \( L^p \)-regularity on $[t_0, t_0+R]$, by taking $f= F$ and using \eqref{important_inequality1}, we get that 
\begin{equation}\label{v_1-v_2_estimate}
    \|v_1-v_2\|_{E_1([t_0, t_0+R])} \leq c_{\mathrm{M}}(t_1,u_1) \|F\|_{E_0([t_0, t_0+R])}.
\end{equation}
For $0<\varepsilon<1$ we observe  that, if we have the bounds $\norm{u_1-u_2}_{X_p} \leq \frac{\varepsilon}{2L^\infty_R c_{\mathrm{M}}(t_1,u_1)}$ and $\norm{A_\infty(t_2,u_2)-A_\infty(t_1,u_2)}_{\mathcal{L}(X_1, X_0)}\leq \frac{\varepsilon}{2 c_{\mathrm{M}}(t_1,u_1)}$, then by combining  \eqref{estimate_F} and \eqref{v_1-v_2_estimate}, we obtain 
\begin{equation}\label{final_v_1-v_2}
\begin{aligned}
    \|v_1-v_2\|_{E_1([t_0, t_0+R])} \leq \varepsilon\norm{v_2}_{E_1([t_0, t_0+R])}.
\end{aligned}
\end{equation}
Now, using the triangle inequality on \eqref{final_v_1-v_2}, rearranging, and using \eqref{important_inequality1}, we get
\begin{equation}
\begin{aligned}
   \left(1-\varepsilon\right)\|v_2\|_{E_1([t_0, t_0+R])} &\leq \|v_1\|_{E_1([t_0, t_0+R])} \\&\leq c_{\mathrm{M}}(t_1,u_1) \|f\|_{E_0([t_0, t_0+R])}.
   \end{aligned}
\end{equation}
Therefore, we have $c_{\mathrm{M}}(t_2,u_2)\leq\frac{c_{\mathrm{M}}(t_1,u_1)}{1-\varepsilon}$. Similarly, the roles of $(t_1, u_1)$ and $(t_2, u_2)$ can be interchanged to obtain also that $c_{\mathrm{M}}(t_1,u_1)\leq\frac{c_{\mathrm{M}}(t_2,u_2)}{1-\varepsilon}$ and to conclude that $(1-\varepsilon)c_{\mathrm{M}}(t_1,u_1)\leq c_{\mathrm{M}}(t_2,u_2)\leq\frac{c_{\mathrm{M}}(t_1,u_1)}{1-\varepsilon}.$

Using our previous analysis and knowing that \( A_\infty: [t_0, \infty) \times X_p \rightarrow \mathcal{L}(X_1, X_0) \) is a continuous mapping, it is immediate that the mapping 
\[
[t_0, t_0+R] \times X_p \ni (t,u) \mapsto c_{\mathrm{M}}(t,u)\in \R
\]
is continuous.
\end{proof}
\begin{remark}\label{eq:def-maximal-regularity-estimate}
    If \( U \subset X_p \) is compact, by Lemma \ref{Introductory_lemma}, it follows that the mapping
\[
[t_0, t_0+R] \times U \ni (t,u) \mapsto c_{\mathrm{M}}(t,u)\in \R
\]
is bounded. We will use this fact in the proof of next theorem. In fact, for a suitable choice of $R>0$, we will consider $U=\phi_{\infty}([t_0,t_0+R])$, where $\phi_\infty:[t_0,t_0+R]\to X_p$ is continuous. We will then fix a constant \( c_{\mathrm{M}} > 0 \) of the form $$c_{\mathrm{M}}:=\sup_{(t,u)\in [t_0,t_0+R]\times U} c_{\mathrm{M}}(t,u).$$ 
Lemma \ref{Introductory_lemma} ensures that above supremum is finite.
\end{remark}

We are now in the position to state and prove the main result of our paper.
\begin{theorem}\label{main theorem1}
  Suppose that for each $n\in\N\cup\{\infty\}$ the pair \( (A_n, f_n) \) satisfies  the assumptions ~\ref{assumption_structure}, \ref{existence_reason1}, \ref{existence_reason2}, \ref{existence_reason3}, and~\ref{existence_reason4}. 

Additionally, assume that the sequences \( (f_n)_{n \geq 1} \) and \( (A_n)_{n \geq 1} \) converge to \( f_\infty \) in \( X_0 \) and \( A_\infty \) in \( \mathcal{L}(X_1, X_0) \), respectively, uniformly on bounded subsets of \( [t_0, \infty) \times X_p \) as $n\to\infty$. Specifically, the following properties are satisfied:
\begin{enumerate}[label=\rm{(II)},ref=\rm{(II)}]
    \item \label{uniform_convergence}
    For every \( \varepsilon > 0 \), every compact subinterval \( K \subset [t_0, \infty) \), and every \( \eta > 0 \), there exists \( N \geq 1 \) such that for all \( n \geq N \), the following hold:
    
    \begin{enumerate}[label=\rm{(II.\alph*)},ref=\rm{(II.\alph*)}]
        \item \label{uniform_fn}
        \[
        \sup_{t \in K} \sup_{\|u\|_{X_p} \leq \eta} \|f_n(t, u) - f_\infty(t, u)\|_{X_0} < \varepsilon,
        \]
        
        \item \label{uniform_An}
        \[
        \sup_{t \in K} \sup_{\|u\|_{X_p} \leq \eta} \|A_n(t, u) - A_\infty(t, u)\|_{\mathcal{L}(X_1, X_0)} < \varepsilon.
        \]
    \end{enumerate}
\end{enumerate}

Let \( (u_0^n)_{n \geq 1} \subset X_p \) be a sequence of initial data satisfying
\begin{enumerate}[label=\rm{(III)},ref=\rm{(III)}]
    \item \label{covergenceof intialdata}
    \( \|u_0^n - u_0^\infty\|_{X_p} \to 0 \quad \text{as } n \to \infty \).
\end{enumerate}

For each \( n \geq 0 \), let \( \phi_n \in E_1(I)\) denote the unique solution to the problem \eqref{main_equation}, defined on the maximal interval of existence \( [t_0, t_0 + T_n) \), where \( I \) is any compact subinterval of \( [t_0, t_0 + T_n) \). Then:
\begin{itemize}
    \item The sequence \( (\phi_n)_{n \geq 1} \) converges to \( \phi_\infty \) in the following sense: for every compact subinterval \( K \subset [t_0, t_0 + T_\infty) \),
    \[
    \lim_{n \to \infty} \|\phi_n - \phi_\infty\|_{E_1(K)} = 0.
    \]
    \item The maximal existence time of the limit $\phi_\infty$ satisfies the following inequality:
    \begin{equation}\label{T_infty_limsup}
        T_\infty \leq \limsup_{n \to \infty} T_n.
    \end{equation}    
\end{itemize}
\end{theorem}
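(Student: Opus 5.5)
We fix a compact interval $K=[t_0,t_0+\tau]\subset[t_0,t_0+T_\infty)$. The plan is to compare $\phi_n$ with $\phi_\infty$ on $K$ by freezing the operator along the limit trajectory, and to run a continuation argument in time.

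\textbf{Step 1: the linearized error equation.} Since $\phi_\infty\in E_1(K)\hookrightarrow C(K;X_p)$, the set $U=\phi_\infty(K)$ is compact in $X_p$. Let $M:=\sup_{t\in K}\|\phi_\infty(t)\|_{X_p}$. By Remark~\ref{eq:def-maximal-regularity-estimate}, built on Lemma~\ref{Introductory_lemma}, $c_{\mathrm M}$ is a uniform bound for the maximal regularity constants of $A_\infty(s,u)$ with $(s,u)\in K\times U$. For $n$ large, as long as $\phi_n$ exists, the difference $w_n:=\phi_n-\phi_\infty$ solves
\[
\frac{dw_n}{dt}+A_\infty(t,\phi_\infty)w_n=G_n(t),\qquad w_n(t_0)=u_0^n-u_0^\infty,
\]
where
\[
G_n=\bigl[A_\infty(t,\phi_\infty)-A_\infty(t,\phi_n)\bigr]\phi_n+\bigl[A_\infty(t,\phi_n)-A_n(t,\phi_n)\bigr]\phi_n+f_n(t,\phi_n)-f_\infty(t,\phi_\infty).
\]

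\textbf{Step 2: maximal regularity for the time-dependent frozen operator.} The operator $t\mapsto A_\infty(t,\phi_\infty(t))$ is continuous into $\mathcal L(X_1,X_0)$ and has relatively compact range. We use the standard localization argument, which works thanks to the uniform constant $c_{\mathrm M}$. Partition $K$ into small subintervals $[s_{j},s_{j+1}]$ on which $\|A_\infty(t,\phi_\infty(t))-A_\infty(s_j,\phi_\infty(s_j))\|\le 1/(2c_{\mathrm M})$. A Neumann/perturbation argument then gives maximal regularity with constant $2c_{\mathrm M}$ on each piece. To handle the nonzero initial data on each piece, we extend the data via \eqref{def_X_p} and use the trace estimate $\sup_t\|v(t)\|_{X_p}\le C\|v\|_{E_1}$, with a constant uniform in the length of the piece when the initial value is zero. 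This gives a constant $C_K$, independent of $n$, such that
\[
\|w\|_{E_1([t_0,t_0+\sigma])}\le C_K\bigl(\|w(t_0)\|_{X_p}+\|G\|_{E_0([t_0,t_0+\sigma])}\bigr)\quad\text{for all }\sigma\le\tau.
\]

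\textbf{Step 3: bootstrap.} Set $\sigma_n:=\sup\{\sigma<\min(\tau,T_n):\ \|w_n\|_{C([t_0,t_0+\sigma];X_p)}\le 1\}$. On $[t_0,t_0+\sigma_n]$ we have $\|\phi_n\|_{X_p}\le M+1=:R'$, so we can estimate $G_n$ term by term:
\begin{itemize}
\item the first term by $L^\infty_{R'}\|w_n\|_{X_p}\|\phi_n\|_{X_1}$;
\item the second by $\varepsilon_n\|\phi_n\|_{X_1}$, with $\varepsilon_n\to0$ by \ref{uniform_An};
\item the $f$-terms by $\Psi^\infty_{R'}(t)\|w_n\|_{X_p}+\varepsilon_n$, using \ref{existence_reason3} and \ref{uniform_fn}.
\end{itemize}
Writing $\|\phi_n\|_{X_1}\le\|\phi_\infty\|_{X_1}+\|w_n\|_{X_1}$ creates a quadratic term $\|w_n\|_{X_p}\|w_n\|_{X_1}$. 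We handle it together with the linear terms by splitting $[t_0,t_0+\sigma_n]$ into finitely many subintervals, chosen independently of $n$ by absolute continuity of $\|\phi_\infty\|_{X_1}^p$ and $(\Psi^\infty_{R'})^p$, on which the $L^p$-norms of these weights are small. We then absorb the linear terms into the left side, and make the quadratic term small via the smallness of $\sup\|w_n\|_{X_p}$. Iterating over the subintervals, with the endpoint trace of the previous piece used as new initial data, yields
\[
\|w_n\|_{E_1([t_0,t_0+\sigma_n])}\le C\bigl(\|u_0^n-u_0^\infty\|_{X_p}+\varepsilon_n\bigr)\to0.
\]
By the trace embedding, the sup-norm of $w_n$ then falls below $1/2$ for large $n$, contradicting maximality of $\sigma_n<\min(\tau,T_n)$. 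The same bound also excludes $T_n\le\tau$, by the blow-up/continuation criterion of \cite[Theorem 3.1]{Bari}: a solution with $\phi_n$ bounded in $E_1$ and converging in $X_p$ at the endpoint can be continued. Hence $T_n>\tau$ and $\|w_n\|_{E_1(K)}\to0$.

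Since $\tau<T_\infty$ was arbitrary, we get $T_n>\tau$ for all large $n$, and therefore $\limsup T_n\ge T_\infty$, which is \eqref{T_infty_limsup}.

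\textbf{Main obstacle.} I expect the hardest part to be Step 2 together with the absorption in Step 3. The issue is getting constants uniform in $n$ and in the subinterval lengths. For this I would work with zero initial data on each piece, after subtracting a fixed extension of the initial value, so that the trace constant does not blow up as the pieces shrink. The uniform bound $c_{\mathrm M}$ from Lemma~\ref{Introductory_lemma} is exactly what makes the finite number of pieces independent of $n$.
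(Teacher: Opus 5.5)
Your argument is correct in substance. It follows the paper's overall strategy: linearize around a frozen operator, use the uniform constant $c_{\mathrm M}$ from Lemma~\ref{Introductory_lemma}, absorb small terms on finitely many $n$-independent short intervals, and close with a continuity argument on $\sup_t\|\phi_n(t)-\phi_\infty(t)\|_{X_p}$. Where you differ is in how you organize the linearization.

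\textbf{How the two linearizations differ.} The paper freezes the operator at a single point, $A_\infty(t_0,u_0^\infty)$, on an interval of length $\delta$, and then repeats on the next interval. It therefore needs only autonomous maximal regularity. The price is two extra freezing-error terms, $T_3$ and $T_6$, which it absorbs using the uniform continuity of $\phi_\infty$ in $X_p$ and of $A_\infty$ in $t$.

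You instead freeze along the trajectory, $A_\infty(t,\phi_\infty(t))$. You first prove a non-autonomous maximal regularity estimate on all of $K$; your Step 2 is itself the same freezing-at-points argument. After that, your $G_n$ contains only the genuinely nonlinear and approximation terms ($T_2$, $T_4$, $T_5$ in the paper's notation). Your second partition is then dictated only by $\|\phi_\infty\|_{L^p(X_1)}$ and $\Psi^\infty_{R'}$. This is more modular, but it does the localization twice, whereas the paper gets away with a single partition.

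\textbf{Where your version is more careful.} You handle two points that the paper treats loosely.
\begin{itemize}
\item You make the constants uniform in the length of the pieces by subtracting an extension of the initial value. The paper simply puts the zero-initial-data constant $c_{\mathrm M}$ in front of $\|u_0^n-u_0^\infty\|_{X_p}$.
\item You explicitly rule out $T_n\le\tau$ via the continuation criterion. This actually gives the stronger conclusion $\liminf_{n\to\infty}T_n\ge T_\infty$.
\end{itemize}

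\textbf{One slip to fix.} With bootstrap threshold $1$, the quadratic term $L^\infty_{R'}\sup_t\|w_n(t)\|_{X_p}\,\|w_n\|_{L^p(X_1)}$ is bounded only by $L^\infty_{R'}\|w_n\|_{E_1}$. That cannot be absorbed unless $C_K L^\infty_{R'}<1$, so the ``smallness of $\sup\|w_n\|_{X_p}$'' you invoke is not available. Replace $1$ by a threshold $\epsilon_0\le 1$ with, say, $C_K L^\infty_{M+1}\epsilon_0\le\frac14$. This is the role played by the paper's $\epsilon$ in the sets $S^k_n$. Use $L^\infty_{M+1}$ and $\Psi^\infty_{M+1}$ throughout, so the partition does not depend on $\epsilon_0$. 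With that change the rest of your argument goes through as written.
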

\begin{proof}
    Under assumptions~\ref{assumption_structure}, \ref{existence_reason1}, \ref{existence_reason2}, \ref{existence_reason3}, and~\ref{existence_reason4}, it follows that \( T_n > 0 \) for each \( n \in \mathbb{N} \cup \{\infty\} \) (see~\cite[Theorem 3.1]{Bari}).

 Let \( T_\infty' < T_\infty \) be fixed. Since \( \phi_\infty \in  C([t_0, t_0 + T_\infty']; X_p) \), there exists \( R > 0 \) such that
\[
\| \phi_\infty(t) \|_{X_p} \leq R, \quad \text{for all } t \in [t_0, t_0 + T_\infty'].
\]
Moreover, for every \( \epsilon > 0 \), there exists \( 0 < \delta_1 \leq 1 \) such that the following hold:
\begin{equation} \label{eq:equi-local-phi0}
\begin{aligned}
\| \phi_\infty(t_2) - \phi_\infty(t_1) \|_{X_p} &\leq \epsilon, 
\quad \text{for all } t_1, t_2 \in [t_0, t_0 + T_\infty'], \ \text{with } 0 \leq t_2 - t_1 \leq \delta_1.
\end{aligned}
\end{equation}
We also know that $\phi_\infty \in E_1([t_0, t_0 + T_\infty'])$. Therefore, for every \( \epsilon > 0 \), there exists \( 0 < \delta_2 \leq 1 \) such that the following hold:
\begin{equation} \label{eq:equi-local-phi_infty}
\begin{aligned}
\| \phi_\infty \|_{E_1([t_0, t_0 + \delta_1])} &\leq \epsilon.
\end{aligned}
\end{equation}
It is also given that \( A_\infty: [t_0, \infty) \times X_p \rightarrow \mathcal{L}(X_1, X_0) \) is continuous. Therefore, the map 
\[
(t, s) \mapsto A_\infty(t, \phi_{\infty}(s))
\]
is uniformly continuous on \( [t_0, t_0 + T_\infty'] \times [t_0, t_0 + T_\infty'] \); that is, for every \( \epsilon > 0 \), there exists \( 0 < \delta_3 \leq 1 \) such that
\begin{equation}
    \| A_\infty(t_1, \phi_0(s_1)) - A_\infty(t_2, \phi_0(s_2)) \|_{\mathcal{L}(X_1, X_0)} \leq \epsilon
\end{equation}
for all \( t_1, t_2, s_1, s_2 \in [t_0, t_0 + T_\infty'] \) with
\(
0 \leq t_2 - t_1 \leq \delta_2, \text{ and } 0 \leq s_2 - s_1 \leq \delta_2.
\)
Since, we know $\Psi^\infty_{2R} \in L^p(t_0, t_0 + T'_\infty)$. Then for every \( \epsilon > 0 \), there exists a constant $0 < \delta_4 \leq {T'_\infty}$ such that for all $a, b \in (t_0, t_0 + T'_\infty)$ with $|a - b| < \delta$, one has
\begin{align}\label{eq:Psi02R-epsilon}
    \|\Psi^\infty_{2R}\|_{L^p(a,b)} \leq \epsilon.
\end{align} 

Fix $\epsilon>0$, and let us take $\delta:= \min\{\delta_1, \delta_2, \delta_3, \delta_4,T_\infty'\}$. Define
\begin{equation*}
\begin{aligned}
        S^0_n :=   \{t\in (0, T_n) \colon t \leq \delta, \mbox{ }
        \norm{\phi_\infty(\tau)-\phi_n(\tau)}_{X_p} \leq \epsilon \mbox{ for all }\tau \in [t_0,t_0+t]\}.\label{def-t_n}   
\end{aligned}
\end{equation*}
From assumption~\ref{covergenceof intialdata}, together with the continuity of \( \phi_\infty \) and \( \phi_n \), it follows that there exists \( N_1 \in \mathbb{N} \) such that for all \( n \geq N_1 \), the set \( S^0_n \) is non-empty. Consequently, we define \(t_n := \sup S^0_n > 0
\) for all \( n \geq N_1 \).
Our goal is to show that \( t_n = \delta \) for large $n\in\N$.

Fix \( n \geq N_1 \). We begin by rewriting the problem~\eqref{main_equation} in the following form:
\begin{equation}\label{main_equation1}
\begin{cases}
\displaystyle
\frac{d\phi_{n}}{dt} + A_\infty(t_0, u_0^\infty)\phi_n 
= f_n(t, \phi_n) - A_n(t, \phi_n)\phi_n + A_\infty(t_0, u_0^\infty)\phi_n, 
& \quad t > t_0, \\[10pt]
\phi_n(t_0) = u_0^n.
\end{cases}
\end{equation}
Similarly, for ``\( n = \infty \)'', we consider:
\begin{equation}\label{main_equation2}
\begin{cases}
\displaystyle
\frac{d\phi_{\infty}}{dt} + A_\infty(t_0, u_0^\infty)\phi_\infty 
= f_\infty(t, \phi_\infty) - A_\infty(t, \phi_\infty)\phi_\infty + A_\infty(t_0, u_0^\infty)\phi_\infty, 
& \quad t > t_0, \\[10pt]
\phi_\infty(t_0) = u_0^\infty.
\end{cases}
\end{equation}
 By subtracting equations~\eqref{main_equation1} and~\eqref{main_equation2} side by side, and using the fact that \( A_\infty(t_0, u^\infty_0) \)  has the property of maximal \( L^p \)-regularity on the interval \( [t_0, t_0 + \delta] \), we deduce the following estimate for all \( t_n' \leq t_n \leq T'_\infty \):
\begin{equation}\label{Maximal_regularity1}
\begin{aligned}
   &\norm{ \phi_{n} - \phi_{\infty} }_{E_1([t_0, t_0 + t'_n])}\leq c_{\mathrm{M}}\left( \norm{ u^{n}_{0} - u^{\infty}_{0}}_{X_{p}} + \norm{f_{n}(\cdot, \phi_{n}) - f_\infty(\cdot, \phi_{\infty})}_{E_0([t_0, t_0 + t'_n])}\right)\\ 
   & +c_{\mathrm{M}}\norm{A_\infty(t_0, u_0^\infty)\phi_\infty - A_\infty(\cdot, \phi_\infty)\phi_\infty - A_\infty(t_0, u_0^\infty)\phi_n + A_n(\cdot, \phi_n)\phi_n}_{E_0([t_0, t_0 + t'_n])},
\end{aligned}
\end{equation}
where \( c_{\mathrm{M}} \) denotes the maximal regularity constant defined in Remark~\ref{eq:def-maximal-regularity-estimate}. By using the triangle inequality on \eqref{Maximal_regularity1}, we get
\begin{equation}\label{Maximal_regularity}
    \begin{aligned}
&\norm{ \phi_{n} - \phi_{\infty} }_{E_1([t_0, t_0 + t'_n])}\\& \qquad\leq c_{\mathrm{M}}\left( \norm{ u^{n}_{0} - u^{\infty}_{0}}_{X_{p}} + \norm{ f_{n}(\cdot, \phi_{n}) - f_\infty(\cdot, \phi_{\infty})}_{E_0([t_0, t_0 + t'_n])}\right)\\ 
&\qquad\quad + c_{\mathrm{M}}\norm{(A_\infty(\cdot, u_0^\infty) - A_\infty(\cdot, \phi_\infty))(\phi_\infty - \phi_n)}_{E_0([t_0, t_0 + t'_n])} \\
&\qquad\quad + c_{\mathrm{M}}\norm{(A_\infty(\cdot, \phi_\infty) - A_\infty(\cdot, \phi_n))\phi_n}_{E_0([t_0, t_0 + t'_n])} \\
&\qquad\quad + c_{\mathrm{M}}\norm{A_n(\cdot, \phi_n)\phi_n - A_\infty(\cdot, \phi_n)\phi_n}_{E_0([t_0, t_0 + t'_n])}\\
&\qquad\quad + c_{\mathrm{M}}\norm{(A_\infty(t_0, u_0^\infty) - A_\infty(\cdot, u_0^\infty))(\phi_\infty - \phi_n)}_{E_0([t_0, t_0 + t'_n])}\\
&\qquad =: c_{\mathrm{M}} \sum_{i=1}^6 T_i,
    \end{aligned}
\end{equation}

Let us estimate the term $T_2$ in \eqref{Maximal_regularity}. By using the triangle inequality and assumption \ref{existence_reason3}, we get 
\begin{equation}\label{f_estimate1}
\begin{aligned}
T_2 &:= \,
\norm{ f_{n}(\cdot, \phi_{n}) - f_\infty(\cdot, \phi_{\infty})}_{E_0([t_0, t_0 + t'_n])}\\
&\leq \Delta_{n} \, \delta^{1/p} + \norm{ \Psi^\infty_{2R}(\phi_{n} - \phi_{\infty})}_{L^{p}(t_0, t_0 + t'_n; X_p)} \\
&\leq \Delta_{n} \, \delta^{1/p} + \|\Psi^\infty_{2R}\|_{L^p(t_0, t_0 + \delta)} \sup_{t \in [t_0, t_0 + t'_n]} \norm{ \phi_{n}(t) - \phi_{\infty}(t) }_{X_p},
\end{aligned}
\end{equation}

where $\Delta_{n} := \sup \{\norm{ f_{n}(s, x) - f_{0}(s, x)} _{X_0} \colon t_0\leq s \leq t_0+\delta, \norm{x}_{X_p} \leq 2R  \}$.
We know that the following inequality holds (see Theorem~\ref{Embedding theorem} in Appendix~\ref{sec:Embedding theorem}): for all $t_n' > 0$
\begin{align}\label{embedding}
    \sup_{t \in [t_0, t_0+t_n']} \norm{ \phi_{n}(t) - \phi_{\infty}(t) }_{X_p} \leq c_1\left(\norm{ u^{n}_{0}- u^{0}_{0}}_{X_{p}}+\norm{ \phi_{n} - \phi_{\infty} }_{E_1([t_0, t_0 + t'_n])}\right),
\end{align}
where $c_1$ is a positive constant independent of $t_n'$. 
Therefore, by combining \eqref{f_estimate1} and \eqref{embedding}, and using \eqref{eq:Psi02R-epsilon}, we get the following estimate:
\begin{equation}\label{final_2}
\begin{aligned}
    T_2&\leq\Delta_{n} {\delta}^\frac{1}{p}+
    \|\Psi^\infty_{2R}\|_{L^p(t_0,t_0+\delta)}c_1\left(\norm{ u^{n}_{0}- u^{\infty}_{0}}_{X_{p}}+\norm{ \phi_{n} - \phi_{\infty} }_{E_1([t_0, t_0 + t'_n])}\right)\\
    &\leq \Delta_{n} {\delta}^\frac{1}{p}+ \epsilon c_1 \left(\norm{ u^{n}_{0}- u^{\infty}_{0}}_{X_{p}}+\norm{ \phi_{n} - \phi_{\infty} }_{E_1([t_0, t_0 + t'_n])}\right).
\end{aligned}
\end{equation}
We estimate the term $T_3$ in \eqref{Maximal_regularity}.
From \ref{existence_reason1}  and \eqref{eq:equi-local-phi0}, we get that 
\begin{equation}\label{final_3}
    \begin{aligned}
T_3 &:= \,\norm{(A_\infty(\cdot,u_0^\infty) - A_\infty(\cdot, \phi_\infty(\cdot)))(\phi_\infty(\cdot) - \phi_n(\cdot))}_{E_0([t_0, t_0 + t_n'])} \\
&\leq L^\infty_{2R} \sup_{t \in [t_0, t_0 + \delta]} \norm{u_0^\infty - \phi_\infty(t)}_{X_{p}} \norm{\phi_n - \phi_\infty}_{E_1([t_0, t_0 + t_n'])} \\
&\leq L^\infty_{2R} \epsilon \norm{ \phi_n - \phi_\infty }_{E_1([t_0, t_0 + t_n'])}.
    \end{aligned}
\end{equation}
 Concerning the term $T_4$ in \eqref{Maximal_regularity}, from \ref{existence_reason1} and \eqref{eq:equi-local-phi0}, we get that 
\begin{equation}\label{fourth_estimate}
    \begin{aligned}
        T_4 &:= \,\norm{(A_\infty(\cdot,\phi_\infty)- A_\infty(\cdot,\phi_n))\phi_n}_{E_0([t_0, t_0 + t'_n])}\\
        &\leq L^\infty_{2R}\sup_{t \in [t_0, t_0 + t'_n]}\norm{ \phi_\infty(t) - \phi_n(t)}_{X_{p}} \norm{\phi_n}_{E_1([t_0, t_0 + t'_n])}\\
        &\leq L^\infty_{2R}\sup_{t \in [t_0, t_0 + t'_n]}\norm{ \phi_\infty(t) - \phi_n(t)}_{X_{p}} \norm{\phi_n - \phi_\infty}_{E_1([t_0, t_0 + t'_n])} \\
        &\qquad+ L^\infty_{2R}\sup_{t \in [t_0, t_0 + t'_n]}\norm{ \phi_\infty(t) - \phi_n(t)}_{X_{p}} \norm{\phi_\infty}_{E_1([t_0, t_0 + t'_n])}\\
        &\leq L^\infty_{2R} \epsilon \norm{\phi_n - \phi_\infty}_{E_1([t_0, t_0 + t'_n])} + L^\infty_{2R} \sup_{t \in [t_0, t_0 + t'_n]}\norm{ \phi_\infty(t) - \phi_n(t)}_{X_{p}} \epsilon.
    \end{aligned}
\end{equation}
Therefore, by combining \eqref{fourth_estimate} and \eqref{embedding} we get the following the estimate:
\begin{equation}\label{final_4}
    \begin{aligned}
        T_4 &\leq \,
         L^\infty_{2R}\epsilon \left(\norm{\phi_n - \phi_\infty}_{E_1([t_0, t_0 + t'_n])}
         + c_1\left(\norm{ u^{n}_{0} - u^{\infty}_{0}}_{X_{p}} + \norm{ \phi_{n} - \phi_{\infty} }_{E_1([t_0, t_0 + t'_n])}\right) \right).
    \end{aligned}
\end{equation}
We estimate the term $T_5$ in \eqref{Maximal_regularity} using \ref{eq:equi-local-phi_infty} as follows:
\begin{equation}\label{final_5}
    \begin{aligned}
        T_5 &:= \,\norm{A_n(\cdot, \phi_n)\phi_n - A_\infty(\cdot, \phi_n)\phi_n}_{E_0([t_0, t_0 + t'_n])} \\
        &\leq \tilde{\Delta}_{n} \norm{\phi_n}_{E_1([t_0, t_0 + t'_n])} \\
        &\leq \tilde{\Delta}_{n} \norm{\phi_n - \phi_\infty}_{E_1([t_0, t_0 + t'_n])} + \tilde{\Delta}_{n} \norm{\phi_\infty}_{E_1([t_0, t_0 + t'_n])} \\
        &\leq \tilde{\Delta}_{n} \norm{\phi_n - \phi_\infty}_{E_1([t_0, t_0 + t'_n])} + \tilde{\Delta}_{n} \epsilon,
    \end{aligned}
\end{equation}
where $\tilde{\Delta}_{n} := \sup \{\norm{A_n(s, x)- A(s,x)}_{\mathcal{L}(X_1,X_0)} \colon t_0\leq s \leq t_0+\delta, \norm{x}_{X_p} \leq 2R  \}$. In the similar fashion, we find the following estimate for the term $T_6$ in \eqref{Maximal_regularity}.
\begin{equation}\label{final_6}
    \begin{aligned}
T_6 &:= \,\norm{(A_\infty(t_0,u_0^\infty) - A_\infty(\cdot, u_0^\infty))(\phi_\infty - \phi_n)}_{E_0([t_0, t_0 + t'_n])} \\
         &\leq \sup_{t \in [t_0, t_0 + t'_n]} \norm{A_\infty(t_0,u_0^\infty) - A_\infty(t, u_0^\infty)}_{\mathcal{L}(X_1, X_0)} \norm{\phi_n - \phi_\infty}_{E_1([t_0, t_0 + t'_n])} \\
         &\leq \epsilon \norm{\phi_n - \phi_\infty}_{E_1([t_0, t_0 + t'_n])}.
    \end{aligned}
\end{equation}

Combining \eqref{Maximal_regularity}, \eqref{final_2}, \eqref{final_3}, \eqref{final_4}, \eqref{final_5} and \eqref{final_6}, we get
\begin{equation}\label{combined}
    \begin{aligned}
      &\norm{ \phi_n - \phi_\infty }_{E_1([t_0, t_0 + t'_n])} \\
      &\qquad\leq c_{\mathrm{M}} \norm{ u^n_0 - u^\infty_0 }_{X_p} + c_{\mathrm{M}} \Delta_n \delta^{\frac{1}{p}} \\
      &\qquad\quad + c_{\mathrm{M}} \epsilon c_1 \left( \norm{ u^n_0 - u^\infty_0 }_{X_p} + \norm{ \phi_n - \phi_\infty }_{E_1([t_0, t_0 + t'_n])} \right) \\
      &\qquad\quad + c_{\mathrm{M}} L^\infty_{2R} \epsilon \norm{ \phi_n - \phi_\infty }_{E_1([t_0, t_0 + t'_n])} \\
      &\qquad\quad + c_{\mathrm{M}} L^\infty_{2R} \epsilon \norm{ \phi_n - \phi_\infty }_{E_1([t_0, t_0 + t'_n])} \\
      &\qquad\quad + c_{\mathrm{M}} L^\infty_{2R} c_1 \epsilon \left( \norm{ u^n_0 - u^\infty_0 }_{X_p} + \norm{ \phi_n - \phi_\infty }_{E_1([t_0, t_0 + t'_n])} \right) \\
      &\qquad\quad + c_{\mathrm{M}} \tilde{\Delta}_n \norm{ \phi_n - \phi_\infty }_{E_1([t_0, t_0 + t'_n])} + c_{\mathrm{M}} \tilde{\Delta}_n \epsilon \\
      &\qquad\quad + c_{\mathrm{M}} \epsilon \norm{ \phi_n - \phi_\infty }_{E_1([t_0, t_0 + t'_n])}.
    \end{aligned}
\end{equation}
Take $0<\epsilon \leq \frac{R}{4}$ such that $\epsilon\left(c_{\mathrm{M}}c_1+2c_{\mathrm{M}}L^0_{2R}+c_{\mathrm{M}}L^0_{2R}c_1+c_{\mathrm{M}}\right) \leq \frac12$. By \eqref{combined} we then have that 
\begin{equation}\label{simplified}
    \begin{aligned}
      \norm{ \phi_{n} - \phi_{\infty} }_{E_1([t_0, t_0 + t'_n])}&\leq 2c_{\mathrm{M}} \norm{ u^{n}_{0} - u^{\infty}_{0} }_{X_{p}}
      + 2c_{\mathrm{M}} \Delta_{n} \delta^{1/p}\\
      &\quad+ 2c_{\mathrm{M}} c_1 \norm{ u^{n}_{0} - u^{\infty}_{0} }_{X_{p}}
       + 2c_{\mathrm{M}} L^\infty_{2R} c_1 \norm{ u^{n}_{0} - u^{\infty}_{0} }_{X_{p}}\\
      &\quad+ 2c_{\mathrm{M}} \tilde{\Delta}_{n} \norm{ \phi_n - \phi_\infty }_{E_1([t_0, t_0 + t'_n])}
      + 2c_{\mathrm{M}} \tilde{\Delta}_{n},
    \end{aligned}
\end{equation}
 for all $t_n' \leq t_n (\leq T'_\infty)$. By \ref{uniform_An} we choose $n \in \N$ large enough such that $2c_{\mathrm{M}}\tilde{\Delta}_n \leq \frac{1}{2}$. Therefore, for all $t_n' \leq t_n (\leq T'_\infty)$ we have
\begin{equation}\label{final}
    \begin{aligned}
    \norm{ \phi_{n} - \phi_{\infty} }_{E_1([t_0, t_0 + t'_n])}
      &\leq 4c_{\mathrm{M}} \norm{ u^{n}_{0} - u^{\infty}_{0} }_{X_{p}}
      + 4c_{\mathrm{M}} \Delta_{n} \delta^{1/p}
      + 4c_{\mathrm{M}} c_1 \norm{ u^{n}_{0} - u^{\infty}_{0} }_{X_{p}}\\
      &\quad+ 4c_{\mathrm{M}} L^\infty_{2R} c_1 \norm{ u^{n}_{0} - u^{\infty}_{0} }_{X_{p}}
     + 4c_{\mathrm{M}} \tilde{\Delta}_{n}.
    \end{aligned}
\end{equation}
Therefore, by \ref{uniform_convergence}, \ref{covergenceof intialdata}, \eqref{embedding} and \eqref{final}, we conclude that for large $n \in \N$, we have
\begin{equation}\label{conclusion_1}
\sup_{t \in [t_0, t_0+\delta]} \norm{ \phi_{n}(t) - \phi_{\infty}(t) }_{X_p} \leq \frac{\epsilon}{2}.
\end{equation}
So, for large $n \in \N$, we get $t_n = \delta$. Now, if $\delta < T'_\infty$, then we define $S^1_n$ as follows:
\begin{equation*}
\begin{aligned}
        S^1_n :=   \{t\in (0, T_n) \colon t \leq \delta, \mbox{ } 
        \norm{\phi_\infty(\tau)-\phi_n(\tau)}_{X_p} \leq \epsilon \mbox{ for all }\tau \in [t_0+\delta,t_0+\delta+t]\}.\label{def-S^1_n}   
\end{aligned}
\end{equation*}
It is immediate from \eqref{conclusion_1} that $S^1_n$ is non-empty for large $n \in \N$.
The same analysis as before implies that:
\begin{equation}\label{conclusion_2}
\sup_{t \in [t_0+\delta, t_0+2\delta]} \norm{ \phi_{n}(t) - \phi_{\infty}(t) }_{X_p} \leq \frac{\epsilon}{2}.
\end{equation}
By induction, we conclude that for large $n \in \N$, we have
\begin{equation}\label{final_conclusion}
\sup_{t \in [t_0, t_0+T'_\infty]} \norm{ \phi_{n}(t) - \phi_{\infty}(t) }_{X_p} \leq \epsilon.
\end{equation}
Finally, we obtain that \eqref{final} holds for $t_n'=T'_\infty$ and we have that
\begin{equation}
    \lim_{n\to\infty}\norm{ \phi_{n}- \phi_{\infty}}_{E_1([t_0, t_0 + T'_\infty])}= 0.
\end{equation} 
The previous analysis also ensures that, for all \( k \in \mathbb{N} \), there exists \( n_k \in \mathbb{N} \) such that \( T_\infty - \frac{1}{k} \leq t_{n_k}\leq T_{n_k} \). The inequality \eqref{T_infty_limsup} then follows immediately.
\end{proof}
\section{Viscosity limits of non-Newtonian fluid models}\label{application}
In this section, we apply Theorem \ref{main theorem1} to prove our second main result concerning the continuous dependence upon data and viscosity coefficients of solutions to certain non-Newtonian, viscous and incompressible fluid models like those used in hemodynamics.  

Let  $d=2,3$ and $\Omega \subset \mathbb{R}^d$ be a bounded domain with boundary 
$\Gamma := \partial \Omega$ of class $C^{3-}$, representing the region occupied by the fluid. 
The boundary $\Gamma$ is decomposed into two mutually disjoint, relatively open and closed subsets,
\[
\Gamma = \Gamma_0 \cup \Gamma_s.
\]
The outward normal vector to $\Omega$ at a point $x \in \Gamma$ is denoted by $\nu = \nu(x)$.
For each $n\in \N\cup\{\infty\}$, we consider the following  initial-boundary value problem:
\begin{equation}\label{eq:motion}
\left\{
\begin{aligned}
   &\nabla\cdot v_n 
   =0\qquad&&\text{in }(0,T)\times\Omega,
   \\
   &\partial_t (\rho v_n)+ \rho (v_n\cdot \nabla)v_n 
   =\nabla\cdot \mathcal{S}_n(v_n)  \qquad&&\text{in } (0,T)\times \Omega,
   \\
   &\mathcal{S}_n(v_n,\pi_n)= 2\mu_n(\lvert D(v_n)\rvert^2)D(v_n)-\pi_n\mathbf{I},\\
   &\mathcal{B}_j(v_n)v_n=0\qquad&&\text{on }(0,T)\times \Gamma_j, \quad \text{ for }j\in\{0,s\},
   \\
   &v_n(0)=v^n_0\qquad&&\text{in } \Omega.
\end{aligned}
\right.    
\end{equation}
Here $v_n$ denotes the fluid velocity, $\pi_n$ its pressure (the Lagrange multiplier due to the incompressibility constraint), $\rho > 0$ is the fluid (constant) density, and 
$\mathcal{S}_n$ is the Cauchy stress tensor which satisfies the {\em constitutive equation} given in the third equation of \eqref{eq:motion}, with $\mathbf{I}$ the identity tensor and 
\[
D(v_n) := \frac{1}{2}\left(\nabla v_n + (\nabla v_n)^{T}\right)
\]
denoting the rate of deformation tensor  
with components
\[
D_{ij}(v_n) = \tfrac12\left(\partial_{x_i} v_{n,j} + \partial_{x_j} v_{n,i}\right)
\]
with respect to a Cartesian coordinate system. Additionally, we use the notation $|D(v_n)|^2=\sum_{i,j}^d (D_{ij}(v_n))^2$, and the non-constant viscosity coefficient $\mu_n$ is given by the map 
\begin{equation}\label{eq:viscosity}
[0,\infty)\ni s\mapsto \mu_n(s)=\mu_\infty+\eta_n(1+s)^{\frac{d-2}{2}}
\end{equation}
with $\mu_\infty>0$ constant and the sequence of non-negative real numbers $\{\eta_n\}_{n\in \N}$ such that $\eta_n\to 0$ as $n\to\infty$. We note that, in the particular case $\eta_n\equiv 0$, \eqref{eq:motion}$_3$ gives a linear dependence of the Cauchy stress tensor on the rate of deformation tensor characterizing the classical {\em Newtonian} viscous fluids. In such a case, \eqref{eq:motion} reduces to the classical {\em Navier-Stokes equations}. Thus, equations \eqref{eq:motion} is a {\em generalized Newtonian model} for non-Newtonian, viscous and incompressible fluids. 
This type of constitutive equation emerges in hemodynamics problems for the study of blood flows.

For $j\in\{0,s\}$, we denote by $\mathcal{B}_j(v_n)$ the boundary operators on $\Gamma_j$, 
which are given by
\[
\mathcal{B}_0(v_n) v_n= v_n \quad \text{on } \Gamma_0,
\]
and
\[
\mathcal{B}_s(v_n) v_n =
\left( v_n \cdot \nu,\;
 \mathcal{S}_n(v_n,\pi_n)\nu-(\nu\cdot \mathcal{S}_n(v_n,\pi_n)\nu)\nu
\right)
\quad \text{on } \Gamma_s.
\]
From a physical perspective, $\mathcal{B}_0(v)v=0$ corresponds to  a {\em no-slip boundary condition}, whereas $\mathcal{B}_s(v)v=0$ corresponds to a boundary condition of {\em pure slip}.

By expanding the term $\nabla \cdot \big(2\mu_n(\lvert D(v_n)\rvert^2)D(v_n)\big)$, the system
\eqref{eq:motion} can be rewritten in the following form:
\begin{equation}\label{eq:motion1}
\left\{
\begin{aligned}
   &\nabla \cdot v_n = 0
   && \text{in } (0,T)\times \Omega, \\
   &\partial_t (\rho v_n)
   + \mathcal{A}_n(v_n)\, v_n
   + \nabla \pi_n
   = F(v_n)
   && \text{in } (0,T)\times \Omega, \\
   &\mathcal{B}_j(v_n)v_n=0\qquad&&\text{on }(0,T)\times \Gamma_j, \quad j\in\{0, s\}, \\
   &v_n(0) = v_0^n
   && \text{in } \Omega.
\end{aligned}
\right.
\end{equation}

Here $\mathcal{A}_n(u)$ denotes the quasilinear differential operator 
\begin{equation}\label{def_script_A_n}
\begin{aligned}
\mathcal{A}_n(u)v
:= -\sum_{i=1}^d \Bigg\{ &
\mu_n\!\left(|D(u)|^2\right)\,\Delta v_i
+ \mu_n\!\left(|D(u)|^2\right)\,\partial_{x_i}(\nabla\!\cdot v) \\
&\quad
+ \sum_{k,j,\ell=1}^d
4\,\mu_n'\!\left(|D(u)|^2\right)\,
D_{ik}(u)\,D_{j\ell}(u)\,
\partial_{x_k} D_{j\ell}(v)
\Bigg\} \mathbf{e}_i ,
\end{aligned}
\end{equation}
where $\{\mathbf{e}_i\}_{i=1}^d$ is the canonical basis of $\mathbb{R}^d$ (corresponding to the given Cartesian coordinate system). Furthermore, the nonlinear 
term in \eqref{eq:motion1} is given by
\[
F(v) := -(\rho\,v \cdot \nabla) v .
\]
We aim to prove that ``sufficiently regular'' solutions to \eqref{eq:motion1} (resp. \eqref{eq:motion}) converge to the solution of the corresponding Navier-Stokes system in the limit $\eta_n\to 0$ and $v_0^n\to v^\infty_0$ as $n\to \infty$, in an appropriate topology. In order to achieve such a result, we plan to use Theorem \ref{main theorem1}. As a first step, we need to rewrite  \eqref{eq:motion1} in the form of \eqref{main_equation}. 

We start by  setting 
\[
X_0:=L_\sigma^{p}(\Omega)=\{v\in L^{p}(\Omega):\; \nabla\cdot  v=0\quad\text{in }\Omega,\ v\cdot \nu=0\quad\text{on }\Gamma\} \quad \text{for }p\in [1,\infty),
\]
where the divergence condition holds in the sense of distributions, while the boundary condition holds in the weak sense. 
We also introduce the Banach space 
\[
X_1 := \bigl\{
v \in H_p^2(\Omega)\cap X_0 \;:\;
v = 0 \ \text{on } \Gamma_0,\;
v \cdot \nu = 0 \ \text{and}\;
 D(v) \nu= (D(v)\cdot\nu) \nu  \ \text{ on } \Gamma_s
\bigr\},
\]
where $H_p^2(\Omega)$ denotes the Bessel potential space. 
We define the operators
\begin{equation}\label{def_A_n-f_n-application}
A_n(t,u) := \mathbb{P}\,\mathcal{A}_n(u),
\qquad
f_n(t,u) := \mathbb{P}\,F(u),
\end{equation}
where \(\mathbb{P}\) denotes the Helmholtz projection from \(L^p(\Omega)\) onto \(X_0\) (see \cite[Theorem~1.2]{Galdi}).

With these definitions, the system of equations \eqref{eq:motion1} can be rewritten as the evolution equation \eqref{main_equation} on the Banach space \(X_0\):
\begin{equation}\label{evolution-equation-application}
\begin{cases}
\displaystyle
\frac{d\phi_{n}}{dt} + A_n(t,\phi_{n})\,\phi_{n}= f_{n}(t, \phi_{n}),
& t>0, \\[8pt]
\phi_{n}(0)= v^{n}_{0},
\end{cases}
\end{equation}
where $\phi_n= v_n$ and $v^n_0\in X_p \subset W^{2-\frac{2}{p},p}(\Omega)$, where \(X_p\) is endowed with the norm inherited from 
\(W^{2-\frac{2}{p},p}(\Omega)\). Throughout our calculations, we use the following characterization of the fractional Sobolev space (more generally, Besov space) $W^{2s,p}(\Omega)=B_{pp}^{2s}(\Omega)=(X_0,X_1)_{s,p}$ as real interpolation of the spaces $X_0$ and $X_1$ defined above.

 We are now ready to state the main result of this section.
\begin{theorem}\label{th:main-Navier-Stokes}
    Let $p>d+2$ and the data in \eqref{eq:motion} satisfy the following conditions: 
    \begin{itemize}
        \item[(a)] The initial data $(v^n_0)_{n\in \N}$ and $v^\infty_0\in W^{2-\frac 2p,p}(\Omega)$ are such that $v^n_0\to v^\infty_0$ in $W^{2-\frac 2p,p}(\Omega)$ as $n\to \infty$.
        \item[(b)] The sequence of {\em generalized viscosity coefficients} $(\eta_n)_{n\in \N}$ in \eqref{eq:viscosity} converges to zero as $n\to \infty$.  
    \end{itemize} 
    Then, the sequence \( (v_n)_{n \in \N}\) of solutions to \eqref{evolution-equation-application} (resp. \eqref{eq:motion}) on the maximal time interval $[0,T_n)$ 
    converges to the (unique) solution $v_\infty$ of the following evolution equation for the Navier-Stokes system 
    \begin{equation}\label{eq:abstract-NS}
    \left\{\begin{aligned}
        &\frac{d v_\infty}{dt}=\mathbb P(\mu_\infty\Delta v_\infty-(\rho\, v_\infty\cdot\nabla)v_\infty) && \text{in } (0,T_\infty), 
        \\
        &v_\infty(0) = v_0^\infty,
    \end{aligned}\right.
    \end{equation}
    in the following sense: for every compact subinterval \( K \subset [0, T_\infty) \),
    \[
    \lim_{n \to \infty} \|v_n - v_\infty\|_{E_1(K)} = 0,
    \]
    where we recall that $E_1(K):=W^{1,p}(K,X_0)\cap L^p(K,X_1)$ (cf. \eqref{def_E1_and_E0_space} and \eqref{def_E1_and_E0}). 
    
    In addition, the maximal existence time of the limit $v_\infty$ satisfies the following inequality:
    \begin{equation}\label{T_infty_limsup1}
        T_\infty \leq \limsup_{n \to \infty} T_n.
    \end{equation}
\end{theorem}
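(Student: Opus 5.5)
The plan is to derive Theorem~\ref{th:main-Navier-Stokes} from Theorem~\ref{main theorem1}, with $t_0=0$, $A_n(t,u)=\mathbb P\mathcal A_n(u)$ and $f_n(t,u)=\mathbb P F(u)$ as in \eqref{def_A_n-f_n-application}. For $n=\infty$ we have $\eta_\infty=0$, so $A_\infty(t,u)=-\mu_\infty\mathbb P\Delta$ (using $\nabla\cdot v=0$ on $X_1$), which gives exactly \eqref{eq:abstract-NS}. The basic tool is the embedding furnished by $p>d+2$:
\[
X_p\hookrightarrow W^{2-\frac2p,p}(\Omega)\hookrightarrow C^1(\overline\Omega).
\]
Indeed, $2-\frac2p-\frac dp>1$ is equivalent to $p>d+2$. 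Consequently $D(u)\in C(\overline\Omega)$ with $\|D(u)\|_\infty\le C\|u\|_{X_p}$, and the map $u\mapsto D(u)$ is Lipschitz from $X_p$ into $C(\overline\Omega)$.

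\textbf{Step 1 (assumptions on $f_n$).} Since $f_n=f_\infty$ is independent of $n$ and $t$, assumption \ref{uniform_fn} holds trivially, and \ref{assumption_structure1}, \ref{existence_reason2} (because $f(0)=0$) are immediate. For \ref{existence_reason3} and continuity we write
\[
(u_1\cdot\nabla)u_1-(u_2\cdot\nabla)u_2=((u_1-u_2)\cdot\nabla)u_1+(u_2\cdot\nabla)(u_1-u_2).
\]
With the $C^1$ embedding this gives $\|f(u_1)-f(u_2)\|_{L^p}\le C\rho R\|u_1-u_2\|_{X_p}$, so we may take a constant $\Psi_R$. The Helmholtz projection $\mathbb P$ is bounded on $L^p$, which transfers these bounds to $X_0$.

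\textbf{Step 2 (assumptions on $A_n$).} The coefficients of $\mathcal A_n(u)$ are $\mu_n(|D(u)|^2)$ and $\mu_n'(|D(u)|^2)D_{ik}(u)D_{j\ell}(u)$. These are smooth functions of $D(u)$ on bounded sets; for $d=2$ the derivative $\mu_n'$ vanishes, and for $d=3$ we have $\mu_n(s)=\mu_\infty+\eta_n(1+s)^{1/2}$, which is smooth for $s\ge0$. Hence the coefficient map $u\mapsto$ coefficients is locally Lipschitz from $X_p$ into $C(\overline\Omega)$. Since $\mathcal A_n(u)$ is second order with $L^\infty$ coefficients,
\[
\|\mathcal A_n(u)v\|_{L^p}\le \|\mathrm{coeff}\|_\infty\|v\|_{H^2_p}.
\]
This yields continuity (trivially in $t$) and \ref{existence_reason1}. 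For \ref{uniform_An}, every coefficient difference carries a factor $\eta_n$: on $\|u\|_{X_p}\le\eta$ we get $\|\mathcal A_n(u)-\mathcal A_\infty(u)\|_{\mathcal L(X_1,L^p)}\le \eta_n C(\eta)\to0$, and applying $\mathbb P$ preserves this. The same computation shows that \ref{existence_reason4} holds for $n=\infty$.

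\textbf{Step 3 (maximal regularity \ref{existence_reason4} for $A_n(t,u)$ with $\eta_n>0$).} This is the main obstacle. For fixed $u\in X_p$, $\mathbb P\mathcal A_n(u)$ is a Stokes-type operator with continuous coefficients and mixed no-slip/pure-slip boundary conditions. The boundary condition in $X_1$ is $D(v)\nu$ tangential-free. The true slip condition involves $\mu_n(|D(u)|^2)$, but since this factor is scalar and positive, the tangential condition reduces to the one built into $X_1$. I would first check strong (Legendre--Hadamard) ellipticity of the principal symbol. Expanding it gives
\[
\mu_n|\xi|^2|a|^2+\mu_n(\xi\cdot a)^2+4\mu_n'\,(D(u)\xi\cdot a')\ldots\ \ge\ \mu_\infty|\xi|^2|a|^2
\]
for divergence-compatible amplitudes, using $\mu_n'\ge0$ and the fact that the extra term is a square $\bigl(D(u):\mathrm{sym}(\xi\otimes a)\bigr)^2\ge0$. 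I would also check the Lopatinskii--Shapiro condition for both boundary parts. Then I would invoke the known maximal $L^p$-regularity theory for generalized Newtonian fluids with these boundary conditions (Bothe--Prüss type results, or \cite{Prussbook}, cf.\ \cite{Bari}), via localization and perturbation from the constant-coefficient Stokes problem. This step I would cite rather than reprove.

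\textbf{Step 4 (conclusion).} Initial data converge in $X_p$ by hypothesis (a), which is \ref{covergenceof intialdata}. Theorem~\ref{main theorem1} then gives convergence of $v_n$ to $v_\infty$ in $E_1(K)$ for every compact $K\subset[0,T_\infty)$, together with \eqref{T_infty_limsup1}.
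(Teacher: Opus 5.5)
Your proposal is correct and follows the same route as the paper. You take $A_n=\mathbb P\mathcal A_n$ and $f_n=\mathbb P F$, verify \ref{assumption_structure}--\ref{existence_reason4}, \ref{uniform_convergence} and \ref{covergenceof intialdata} via the embedding $X_p\hookrightarrow C^1(\overline\Omega)$ (which is exactly where $p>d+2$ enters), cite \cite[Theorem 4.1]{Bothe_Pruss} for maximal $L^p$-regularity, and then apply Theorem~\ref{main theorem1}, just as the paper does. One small slip: \ref{existence_reason4} for $n=\infty$ does not follow from ``the same computation'' used for \ref{uniform_An}; it follows from that same citation, since the Stokes operator with these mixed boundary conditions is simply the case $\eta=0$ there.
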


As already mentioned, Theorem \ref{th:main-Navier-Stokes} will follow from Theorem \ref{main theorem1}. We dedicate next lemmas to verify  that assumptions~\ref{assumption_structure}-- \ref{existence_reason4} hold in our case. The restriction on the Lebesgue exponent $p$ will follow accordingly. We first note that, thanks to definition \eqref{def_A_n-f_n-application}, conditions \ref{assumption_structure1} and \ref{existence_reason3} hold trivially.

\begin{lemma}\label{Nonlinearity_of_f}
    Let $p>\frac{d+2}{2}$. For all $n\in \N\cup\{\infty\}$, $f_n(t,u)$ satisfies \ref{assumption_structure2} and  \ref{existence_reason3}.
\end{lemma}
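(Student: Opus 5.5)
Since $f_n(t,u)=\mathbb{P}F(u)=-\rho\,\mathbb{P}\big((u\cdot\nabla)u\big)$ does not depend on $t$ or on $n$, and $\mathbb{P}$ is bounded from $L^p(\Omega)$ onto $X_0$, the plan is to prove one estimate: a local Lipschitz bound for the bilinear map $B(u,w):=(u\cdot\nabla)w$ from $X_p\times X_p$ into $L^p(\Omega)$. Writing
\[
(u_1\cdot\nabla)u_1-(u_2\cdot\nabla)u_2=((u_1-u_2)\cdot\nabla)u_1+(u_2\cdot\nabla)(u_1-u_2),
\]
it suffices to show
\[
\|(u\cdot\nabla)w\|_{L^p(\Omega)}\le C\,\|u\|_{X_p}\|w\|_{X_p}
\]
for all $u,w\in X_p$. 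Then for $\|u_i\|_{X_p}\le R$ we get
\[
\|f_n(t,u_1)-f_n(t,u_2)\|_{X_0}\le 2\rho C\|\mathbb{P}\|R\,\|u_1-u_2\|_{X_p},
\]
so \ref{existence_reason3} holds with the constant function $\Psi^n_R\equiv 2\rho C\|\mathbb{P}\|R$, which lies in $L^p_{\rm loc}$. Continuity in $u$ for every $t$, i.e.\ \ref{assumption_structure2}, follows immediately. The same bound with $u_2=0$ also gives $f_n(\cdot,0)=0$.

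For the bilinear estimate I use that $X_p\hookrightarrow W^{2-2/p,p}(\Omega)$, with the norm inherited from that space as the paper stipulates. By Hölder,
\[
\|(u\cdot\nabla)w\|_{L^p}\le \|u\|_{L^{q_1}}\|\nabla w\|_{L^{q_2}},\qquad \tfrac1{q_1}+\tfrac1{q_2}=\tfrac1p,
\]
and I then choose the exponents via Sobolev embeddings. Put $s=2-2/p$. The embedding $W^{s,p}\hookrightarrow L^{q_1}$ holds whenever $s-d/p\ge -d/q_1$, and $W^{s-1,p}\hookrightarrow L^{q_2}$ holds whenever $s-1-d/p\ge -d/q_2$ (strict inequalities in the borderline/fractional cases, handled as usual). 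Adding these, it suffices that
\[
2s-1-\tfrac{2d}{p}> -\tfrac{d}{p},
\]
that is, $3-\tfrac4p>\tfrac dp$, i.e.\ $p>\tfrac{d+4}{3}$.

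The case I will actually write out is the one needed later. The hypothesis $p>\frac{d+2}{2}$ gives $s-d/p>1-... $; more precisely, $2-\frac{2}{p}-\frac dp>0$ is equivalent to $p>\frac{d+2}{2}$. Hence $W^{s,p}\hookrightarrow L^\infty$, and I take $q_1=\infty$, $q_2=p$, using $\nabla w\in W^{s-1,p}\subset L^p$ since $s\ge1$ for $p\ge 2$. If $p<2$ I would instead use the general exponent split above, which is valid because $\frac{d+2}{2}\ge\frac{d+4}{3}$ for $d\ge 2$.

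The main technical point is justifying these fractional Sobolev embeddings on a bounded $C^{3-}$ domain. I would obtain them via an extension operator to $\mathbb{R}^d$ together with standard Besov embeddings, so that the constant $C$ depends only on $\Omega$, $p$ and $d$. Everything else is routine.
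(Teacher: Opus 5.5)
Your proposal is correct and follows essentially the same route as the paper. You use the same splitting of $(u_1\cdot\nabla)u_1-(u_2\cdot\nabla)u_2$, Hölder in $L^\infty\times L^p$, the embeddings $W^{2-2/p,p}(\Omega)\hookrightarrow L^\infty(\Omega)\cap W^{1,p}(\Omega)$, and the boundedness of $\mathbb{P}$. Your fallback for $p<2$ is vacuous, and it would fail anyway because $W^{s-1,p}$ has negative order when $s<1$; for $d\ge 2$ the hypothesis $p>\frac{d+2}{2}\ge 2$ already forces $s=2-\frac{2}{p}>1$.
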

\begin{proof}
By the Sobolev embedding theorem,
\[
W^{s,p}(\Omega)\hookrightarrow L^\infty(\overline{\Omega})
\quad \text{for } s>\frac{d}{p}.
\]
Moreover, for \( s>1 \), we have the continuous embedding
\[
W^{s,p}(\Omega)\hookrightarrow W^{1,p}(\Omega).
\]
Hence, for
\[
s>\max\left\{1,\frac{d}{p}\right\}
\]
and all \(u,v\in W^{s,p}(\Omega)\), we obtain
\begin{equation}\label{non-linear_estimate}
\begin{aligned}
\|(v\cdot\nabla)v-(u\cdot\nabla)u\|_{L^p(\Omega)}
&\le \|v\|_{L^\infty(\Omega)}\|v-u\|_{W^{1,p}(\Omega)}
     +\|v-u\|_{L^\infty(\Omega)}\|u\|_{W^{1,p}(\Omega)} \\
&\le C\Big(\|v\|_{W^{s,p}(\Omega)}+\|u\|_{W^{s,p}(\Omega)}\Big)
      \|v-u\|_{W^{s,p}(\Omega)},
\end{aligned}
\end{equation}
where \(C>0\)  is a constant depending only on \(\Omega\). Consequently, the nonlinear mapping
\[
v\mapsto (v\cdot\nabla)v\
\]
is well defined from \(W^{s,p}(\Omega)\) to \(L^p(\Omega)\), and bilinear whenever \( s > \max\left\{1, \frac{d}{p} \right\} \). Additionally, if \(u,v\in X_p\) with
\[
\|u\|_{X_p},\ \|v\|_{X_p}\le R
\]
for some $R>0$, then we get
\begin{equation}\label{eq:nonlinear-difference}
\|(v\cdot\nabla)v-(u\cdot\nabla)u\|_{X_0}
\le 2CR\,\|v-u\|_{X_p}
\end{equation}
whenever $p>\frac{d+2}{2}$. 

Finally, since the Helmholtz projection is bounded on \(L^p(\Omega)\), conditions \ref{existence_reason3} and \ref{assumption_structure2} immediately follow. 
\end{proof}
 
\begin{lemma}\label{nonlinearity_of_A}
    Let $p>d+2$. For all $n\in \N\cup\{\infty\}$, the operator $A_n(t,u)$ defined in \eqref{def_A_n-f_n-application} satisfies \ref{existence_reason1}.
\end{lemma}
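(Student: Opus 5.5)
The plan is to estimate $A_n(t,u_1)v-A_n(t,u_2)v$ coefficient by coefficient, exploiting that $\mathcal{A}_n(u)$ is a second-order operator in $v$ whose coefficients are smooth functions of the first derivatives $D(u)$. The main tool is the embedding $X_p\hookrightarrow W^{2-\frac2p,p}(\Omega)\hookrightarrow C^1(\overline{\Omega})$, valid when $1-\frac2p>\frac dp$, i.e.\ $p>d+2$; this is exactly where the hypothesis on $p$ enters. Since $t$ does not appear, we get $\|D(u)\|_{L^\infty}\le C\|u\|_{X_p}\le CR$ uniformly for $\|u\|_{X_p}\le R$.

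Write $\mathcal{A}_n(u_1)v-\mathcal{A}_n(u_2)v$ as a sum of three groups of terms.
\begin{itemize}
\item[(i)] The first group is $\bigl(\mu_n(|D(u_1)|^2)-\mu_n(|D(u_2)|^2)\bigr)\bigl(\Delta v_i+\partial_{x_i}\nabla\cdot v\bigr)$.
\item[(ii)] The second group is $4\bigl(\mu_n'(|D(u_1)|^2)-\mu_n'(|D(u_2)|^2)\bigr)D_{ik}(u_1)D_{j\ell}(u_1)\partial_kD_{j\ell}(v)$.
\item[(iii)] The third group is $4\mu_n'(|D(u_2)|^2)\bigl(D_{ik}(u_1)D_{j\ell}(u_1)-D_{ik}(u_2)D_{j\ell}(u_2)\bigr)\partial_kD_{j\ell}(v)$.
\end{itemize}
Each term has the form (coefficient difference)$\times$(second derivative of $v$). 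We therefore bound it in $L^p$ by the $L^\infty$ norm of the coefficient difference times $\|v\|_{H^2_p}\le C\|v\|_{X_1}$.

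For the coefficient differences, observe that $\mu_n(s)=\mu_\infty+\eta_n(1+s)^{\frac{d-2}{2}}$ is smooth on $[0,\infty)$ for $d=2,3$. Hence on the bounded range $s\in[0,(CR)^2]$ both $\mu_n$ and $\mu_n'$ are Lipschitz, with constants bounded by $C(R)\eta_n$; these constants are even uniform in $n$ since $\eta_n$ is bounded. Using $\bigl||D(u_1)|^2-|D(u_2)|^2\bigr|\le(|D(u_1)|+|D(u_2)|)|D(u_1-u_2)|$, we obtain
\[
\|\mu_n^{(k)}(|D(u_1)|^2)-\mu_n^{(k)}(|D(u_2)|^2)\|_{L^\infty}\le C(R)\|u_1-u_2\|_{X_p},\quad k=0,1.
\]
The product difference in (iii) splits similarly, giving a bound of the form $CR\,\|D(u_1-u_2)\|_{L^\infty}$. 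Finally, boundedness of the Helmholtz projection $\mathbb{P}$ on $L^p(\Omega)$ transfers the estimate to $A_n$, yielding $L^n_R$. For $n=\infty$ we have $\eta_\infty=0$, so the operator is independent of $u$ and the claim is trivial.

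The main obstacle is the precise embedding step. We must justify that the $X_p$-norm (the $W^{2-2/p,p}$-norm, as identified in the paper) controls $\|\nabla u\|_{L^\infty}$, which requires $2-\frac2p-\frac dp>1$, and we must also handle the fractional-order Sobolev embedding on a $C^{3-}$ domain. I would cite the standard Sobolev embedding $W^{s,p}\hookrightarrow C^{1}(\overline\Omega)$ for $s-\frac dp>1$, exactly as was done in Lemma~\ref{Nonlinearity_of_f}.
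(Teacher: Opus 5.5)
Your proposal is correct and follows essentially the same route as the paper. Both arguments use the embedding $W^{2-\frac2p,p}(\Omega)\hookrightarrow C^{1,\alpha}(\overline\Omega)\subset W^{1,\infty}(\Omega)$ for $p>d+2$, bound each coefficient difference in $L^\infty$ against $\|v\|_{X_1}$ (with the same product-rule splitting of the $\mu_n'$ term), use local Lipschitz continuity of $\mu_n$ and $\mu_n'$, and finish with boundedness of $\mathbb{P}$. Your explicit computation of those Lipschitz constants (showing they are even uniform in $n$) is a small bonus over the paper's more qualitative argument.
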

\begin{proof}
Let $u_1,u_2 \in X_p$ satisfy
\begin{equation}\label{assumption_u1_u2<eta}
\|u_1\|_{X_p},\ \|u_2\|_{X_p} \le \eta
\end{equation}
with some $\eta>0$. The Sobolev embedding
\[
W^{2-\frac{2}{p},p}(\Omega)\hookrightarrow C^{1,\alpha}(\overline{\Omega}),
\qquad
\alpha=1-\frac{2}{p}-\frac{d}{p}>0,
\]
holds. In particular, we have the embedding
\begin{equation}\label{embedding_W1inf}
W^{2-\frac{2}{p},p}(\Omega)\hookrightarrow W^{1,\infty}(\Omega).
\end{equation}
Recalling the definition of the operators \eqref{def_A_n-f_n-application} and \eqref{def_script_A_n}, and the boundedness of the Helmholtz projection on $L^p(\Omega)$, it is enough to find an estimate of the form 
\begin{equation}\label{eq:scriptA-Lip-estimate}
    \left\|\mathcal{A}_n(u_1)v-\mathcal{A}_n(u_2)v\right\|_{X_0}\le L^n_\eta \|u_1-u_2\|_{X_p}\|v\|_{X_1}
\end{equation}
with a positive constant $L^n_\eta$ depending at most on $n$, $d$, $\Omega$ and $\eta$. 
To this end, we consider each term in \eqref{def_script_A_n} for fixed $1\leq i,j,k,\ell\leq d$.
We claim that each of the following norms
\begin{align}
&\mathcal N_1:=\left\|(\mu_n\!\left(|D(u_1)|^2)-\mu_n\!\left(|D(u_2)|^2\right)\right)\Delta v_i\right\|_{L^{p}(\Omega)}\nonumber\\
&\mathcal N_2:=\left\|\left(\mu_n\!\left(|D(u_1)|^2\right)-\mu_n\!\left(|D(u_2)|^2\right)\right)\,\partial_{x_i}(\nabla\!\cdot v)\right\|_{L^p(\Omega)}\label{component-of-script-A}\\
&\mathcal N_3:=\left\|\left(\mu_n'\!\left(|D(u_1)|^2\right)
D_{ik}(u_1)D_{j\ell}(u_1)-\mu_n'\!\left(|D(u_2)|^2\right)
D_{ik}(u_2)\,D_{j\ell}(u_2)\right)
\partial_{x_k} D_{j\ell}(v)\right\|_{L^p(\Omega)}\nonumber
\end{align}
 satisfy (up to a multiplicative constant that may depend on $n, d, \Omega$ and $\eta$) an estimate as in \eqref{eq:scriptA-Lip-estimate}. 

We recall that $X_p$ is equipped with the norm $\|\cdot\|_{W^{2-\frac{2}{p},p}(\Omega)}$. 

 We start by noting that 
\begin{equation}
\mathcal N_1,\ \mathcal N_2\le \bigl\|\mu_n(|D(u_1)|^2)-\mu_n(|D(u_2)|^2)\bigr\|_{L^\infty(\Omega)}\|v\|_{X_1}.
\nonumber
\end{equation}
Since $\mu_n:\mathbb{R}^+\to\mathbb{R}$ is Lipschitz continuous on bounded subsets of $\mathbb{R}^+$, there exists a constant $M_n>0$ such that
\[
\begin{aligned}
&\bigl\|\mu_n(|D(u_1)|^2)-\mu_n(|D(u_2)|^2)\bigr\|_{L^\infty(\Omega)}
\le M_{n,\eta}\bigl\||D(u_1)|^2-|D(u_2)|^2\bigr\|_{L^\infty(\Omega)}
\\
&\qquad\le M_{n,\eta} \sum_{i,j=1}^d
\|D_{ij}(u_1)-D_{ij}(u_2)\|_{L^\infty(\Omega)} 
\|D_{ij}(u_1)+D_{ij}(u_2)\|_{L^\infty(\Omega)}.
\end{aligned}
\]
By the embedding \eqref{embedding_W1inf} and the assumption \eqref{assumption_u1_u2<eta}, there exists a constant $C_d>0$ such that
\[
\|D_{ij}(u_1)+D_{ij}(u_2)\|_{L^\infty(\Omega)}\le C_d\,\eta
\quad\text{for all } i,j=1,\dots,d.
\]
Therefore we have
\[
\begin{aligned}
\bigl\|\mu_n(|D(u_1)|^2)-\mu_n(|D(u_2)|^2)\bigr\|_{L^\infty(\Omega)}
&\le
M_{n,\eta} C_d \eta
\sum_{i,j=1}^d
\|D_{ij}(u_1)-D_{ij}(u_2)\|_{L^\infty(\Omega)}
\\
&\le M_{n,\eta} C_d \eta d^2 \norm{u_1-u_2}_{W^{1,\infty}(\Omega)}. 
\end{aligned}\]
Again by \eqref{embedding_W1inf}, 
we conclude that there exists a constant $M_{n,d,\eta}>0$ such that
\begin{equation}\label{estimate_mu_n}
\bigl\|\mu_n(|D(u_1)|^2)-\mu_n(|D(u_2)|^2)\bigr\|_{L^\infty(\Omega)}
\le
M_{n,d,\eta}
\|u_1-u_2\|_{W^{2-\frac{2}{p},p}(\Omega)}.
\end{equation}

Concerning the third norm in \eqref{component-of-script-A}, we immediately see that 
\[\begin{aligned}
\mathcal N_3&\le \bigl\|
\mu_n'\!\left(|D(u_1)|^2\right) D_{ik}(u_1) D_{j\ell}(u_1)
-
\mu_n'\!\left(|D(u_2)|^2\right) D_{ik}(u_2) D_{j\ell}(u_2)
\bigr\|_{L^\infty(\Omega)}\|v\|_{X_1}
\\
&\le\left( 
\bigl\|\mu_n'\!\left(|D(u_1)|^2\right)\bigr\|_{L^\infty(\Omega)}
\bigl\|D_{ik}(u_1)D_{j\ell}(u_1)-D_{ik}(u_2)D_{j\ell}(u_2)\bigr\|_{L^\infty(\Omega)}\right.
\\
&\quad\left.+
\bigl\|D_{ik}(u_2)D_{j\ell}(u_2)\bigr\|_{L^\infty(\Omega)}
\bigl\|\mu_n'\!\left(|D(u_1)|^2\right)
-
\mu_n'\!\left(|D(u_2)|^2\right)\bigr\|_{L^\infty(\Omega)}\right)\|v\|_{X_1}.
\end{aligned}
\]

Note that $\left\|D_{ik}(u_2)D_{j\ell}(u_2)\right\|_{L^\infty(\Omega)}$ is bounded above by $\|u_2\|^2_{W^{1,\infty}(\Omega)}$,  and the latter is bounded by \eqref{embedding_W1inf}. 
 
Similarly, $|D(u_1)|^2$ is bounded and, since $\mu_n'$ is continuous, then $\|\mu_n'\!\left(|D(u_1)|^2\right)\|_{L^\infty(\Omega)}$ is bounded as well.

Since $\mu_n'$ is Lipschitz continuous on bounded subsets of $\R^+$,  we can argue as in the proof of \eqref{estimate_mu_n} and obtain
\[
\bigl\|\mu_n'\!\left(|D(u_1)|^2\right)
-
\mu_n'\!\left(|D(u_2)|^2\right)\bigr\|_{L^\infty(\Omega)}
\le
M'_{n,d,\eta}
\|u_1-u_2\|_{W^{2-\frac{2}{p},p}(\Omega)}
\]
for some $M'_{n,d,\eta}>0$.
Finally, we estimate
\[
\begin{aligned}
\|D_{ik}(u_1)D_{j\ell}(u_1)-D_{ik}(u_2)D_{j\ell}(u_2)\|_{L^\infty(\Omega)}
&\le
\|D_{ik}(u_1)\|_{L^\infty(\Omega)}
\|D_{j\ell}(u_1)-D_{j\ell}(u_2)\|_{L^\infty(\Omega)}
\\
&+
\|D_{j\ell}(u_2)\|_{L^\infty(\Omega)}
\|D_{ik}(u_1)-D_{ik}(u_2)\|_{L^\infty(\Omega)}
\\
&\le
C_{d,\eta}
\|u_1-u_2\|_{W^{2-\frac{2}{p},p}(\Omega)},
\end{aligned}
\]
for some constant $C_{d,\eta}>0$. The last of our claimed bounds is therefore satisfied. 
\end{proof}
\begin{remark}\label{convergence_of_A_n}
    Observe that for all $n \in \mathbb{N} \cup \{\infty\}$ and $s \ge 0$, we have
\[
\mu_n(s) > 0 \quad \text{and} \quad \mu_n(s) + 2 s \, \mu_n'(s) > 0.
\]
Moreover, $\mu_n \to \mu_\infty$ and $\mu_n' \to 0$ uniformly on any compact subset of $[0,\infty)$. 
It then follows immediately that $A_n(u)$ converges to $A_\infty(u)$ in the sense of \ref{uniform_An}, with
\[
A_\infty(u) = -\mu_\infty \Delta.
\]
\end{remark}

We are now ready to complete the proof of the main theorem of this section.

\begin{proof}[Proof of Theorem \ref{th:main-Navier-Stokes}]
By \cite[Theorem 4.1]{Bothe_Pruss}, for all $n \in \mathbb{N} \cup \{\infty\}$ and $u \in X_p$, the operator $A_n(u)$ enjoys the property of maximal $L^p$-regularity. In particular, this property implies that $A_n(u)$ satisfies condition \ref{existence_reason4}.

Moreover, by Lemma \ref{nonlinearity_of_A} and Lemma \ref{Nonlinearity_of_f}, it follows that for each $n \in \mathbb{N} \cup \{\infty\}$ the pair $(A_n, f_n)$ satisfies assumptions \ref{assumption_structure}, \ref{existence_reason1}, \ref{existence_reason2}, and \ref{existence_reason3}.

Finally, in view of Remark \ref{convergence_of_A_n}, condition \ref{uniform_An} is satisfied. Hence, Theorem \ref{main theorem1} applies, and this completes the proof.
\end{proof}

As a consequence of Theorem \ref{th:main-Navier-Stokes}, we can ensure convergence of the corresponding fluid pressure gradients as argued in the next remark. 
\begin{remark}
Equation \eqref{eq:abstract-NS} can be equivalently rewritten as the classical Navier-Stokes equations with an appropriate pressure field $\pi_\infty\in L^p(0,T_\infty;W^{1,p}(\Omega))$ (ensured by the Helmholtz projection):
\begin{equation}\label{eq:NS}
\left\{
\begin{aligned}
   &\nabla \cdot v_\infty = 0
   && \text{in } (0,T)\times \Omega, \\
   &\partial_t (\rho v_\infty)
   -\mu_\infty\Delta v_\infty
   + \nabla \pi_\infty
   = \rho(v_\infty\cdot\nabla)v_\infty 
   && \text{in } (0,T_\infty)\times \Omega, \\
   &v_\infty=0 &&\text{on }(0,T_\infty)\times \Gamma_0,\\
   &\left(v_n \cdot \nu,D(v_\infty)\nu - (D(v_\infty)\nu \cdot \nu)\nu\right)=0&&\text{on }(0,T_\infty)\times \Gamma_s,\\
   &v_\infty(0) = v_0^\infty
   && \text{in } \Omega,
\end{aligned}
\right.
\end{equation}
Let $K$ be a compact subinterval of \( [0, T_\infty) \) with $T_\infty$ as in Theorem~\ref{th:main-Navier-Stokes}, so that 
\[
\lim_{n \to \infty} \|v_n - v_\infty\|_{E_1(K)} = 0.
\]
It follows from Theorem~\ref{Embedding theorem} that
\begin{equation}\label{eq:sup-convergence}
\lim_{n \to \infty} \sup_{t \in K}
\|v_n(t) - v_\infty(t)\|_{X_p} = 0.
\end{equation}
Subtracting side by side \eqref{eq:motion1}$_2$ and \eqref{eq:NS}$_2$, we find that 
\begin{equation}\label{eq:pressure-difference}\begin{split}
\norm{\nabla p_n-\nabla p_\infty}_{L^p(K\times\Omega)}&\le 
\rho\left\|\frac{d v_n}{dt} - \frac{d v_\infty}{dt}\right\|_{E_0(K)}
\\&\quad +\norm{\mathcal A_n(v_n)v_n-\mu_\infty\Delta v_\infty}_{L^p(K\times\Omega)}
\\&\quad+\rho\norm{(v_n\cdot\nabla)v_n-(v_\infty\cdot\nabla)v_\infty}_{L^p(K\times\Omega)}.
\end{split}\end{equation}
For the first term on the right-hand side, we use the definition of the $E_1(K)$-norm (cf. \eqref{def_E1_and_E0_space} and \eqref{def_E1_and_E0}) to get that 
\[
\left\|\frac{d v_n}{dt} - \frac{d v_\infty}{dt}\right\|_{E_0(K)}
\le
\|v_n - v_\infty\|_{E_1(K)}\to 0\quad\text{as }n\to\infty. 
\]
The second term on the right-hand side of \eqref{eq:pressure-difference} converges to zero as $n\to \infty$ thanks to Remark \ref{convergence_of_A_n} and Lemma \ref{nonlinearity_of_A}. Finally, last term in \eqref{eq:pressure-difference} goes to zero as $n\to\infty$ thanks to the estimate \eqref{eq:nonlinear-difference} and the convergence \eqref{eq:sup-convergence}. 
\end{remark}

\appendix
\section{Embedding-type theorem}\label{sec:Embedding theorem}
The following result --stated in a more general form-- can be found in \cite[Appendix B]{navier}. We state it and present its proof in this appendix for completeness. 
\begin{theorem}\label{Embedding theorem}
   For all $t>0$ and $u \in E_1([t_0, t_0+ t])$, there exists $c_1 >0$ (independent of $t$) such that
   \begin{equation}
       \sup_{t \in [t_0, t_0+t]} \norm{ u(t) }_{X_p} \leq c_1\left(\norm{ u(t_0)}_{X_{p}}+\norm{ u }_{E_1([t_0, t_0 + t])}\right)
   \end{equation}
\end{theorem}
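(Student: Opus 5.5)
The estimate will follow from the trace characterization \eqref{def_X_p} of $X_p$ together with an extension operator whose norm does not depend on the length of the interval. Write $\tau$ for the interval length to avoid clashing with the running variable, and set $J_\tau=[t_0,t_0+\tau]$. The plan is to decompose
\[
u=u_h+w,
\]
where $u_h$ is a fixed extension of $u(t_0)$ to a half-line and $w$ has zero initial value. Each piece is then handled separately.

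\emph{Step 1: the initial trace.}
By \eqref{def_X_p}, pick $F\in E_1([0,\infty))$ with $F(0)=u(t_0)$ and $\|F\|_{E_1([0,\infty))}\le 2\|u(t_0)\|_{X_p}$. Put $u_h(s)=F(s-t_0)$. This is defined on all of $[t_0,\infty)$, so restricting it to $J_\tau$ gives
\[
\|u_h\|_{E_1(J_\tau)}\le 2\|u(t_0)\|_{X_p}
\]
for every $\tau$. Moreover, for every $s\ge t_0$, the translate $F(\cdot+s-t_0)$ lies in $E_1([0,\infty))$ with norm at most $\|F\|$, and its value at $0$ is $u_h(s)$. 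Hence $\|u_h(s)\|_{X_p}\le 2\|u(t_0)\|_{X_p}$ directly from \eqref{def_X_p}.

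\emph{Step 2: the zero-trace part.}
Let $w=u-u_h$ on $J_\tau$, so $w(t_0)=0$ and
\[
\|w\|_{E_1(J_\tau)}\le \|u\|_{E_1(J_\tau)}+2\|u(t_0)\|_{X_p}.
\]
Extend $w$ to $[t_0,\infty)$ by reflection at the right endpoint:
\[
\tilde w(s)=w(2(t_0+\tau)-s)\quad\text{for } s\in[t_0+\tau,t_0+2\tau],
\]
then by $0$ for $s>t_0+2\tau$. Extension by $0$ is legitimate in $W^{1,p}$ because $\tilde w(t_0+2\tau)=w(t_0)=0$. Reflection preserves the $L^p$ norms of $w$ and $w'$, so $\|\tilde w\|_{E_1([t_0,\infty))}\le 2\|w\|_{E_1(J_\tau)}$ with a constant independent of $\tau$.

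Now fix $s\in J_\tau$ and consider the translate $\tilde w(\cdot+s)$, which lies in $E_1([0,\infty))$ with value $w(s)$ at $0$. By \eqref{def_X_p},
\[
\|w(s)\|_{X_p}\le \|\tilde w(\cdot+s)\|_{E_1([0,\infty))}\le 2\|w\|_{E_1(J_\tau)}.
\]

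\emph{Step 3: conclusion.}
Combining the two parts, for every $s\in J_\tau$,
\[
\|u(s)\|_{X_p}\le \|u_h(s)\|_{X_p}+\|w(s)\|_{X_p}\le 2\|u(t_0)\|_{X_p}+2\|u\|_{E_1(J_\tau)}+4\|u(t_0)\|_{X_p}.
\]
Taking the supremum over $s$ gives the claim with $c_1=6$, which is independent of $\tau$.

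\emph{Main obstacle.}
The delicate point is the uniformity in $\tau$. A naive extension operator, for instance one built from a cutoff function adapted to the interval, would carry a constant that blows up like $\tau^{-1}$ as $\tau\to 0$. This is exactly why the zero-trace piece is split off first. Once $w(t_0)=0$, the even reflection followed by extension by zero is admissible, and its norm is a pure number independent of $\tau$. A minor technical check is that the translates used in Steps 1 and 2 are genuinely admissible competitors in \eqref{def_X_p}. This holds because $E_1$ functions are continuous into $X_0$, so pointwise values make sense, and translation does not increase $L^p$ norms on the half-line.
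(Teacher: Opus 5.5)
Your argument is correct and uses the same decomposition as the paper: an extension of $u(t_0)$ to a half-line plus a remainder with zero initial value. The difference is in how the two pieces are estimated.

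The paper handles the zero-trace remainder by citing Proposition~6.1 of Pr\"uss--Saal--Simonett, which supplies an embedding constant $c_0$ independent of the interval length. It bounds the extension by an interpolation result (the factor $\frac{p}{p-1}$) and then takes the infimum over all extensions.

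You prove both bounds directly from the trace characterization \eqref{def_X_p}. Translation on the half-line gives $\norm{G(s)}_{X_p}\le\norm{G}_{E_1([t_0,\infty))}$ for every $G\in E_1([t_0,\infty))$ and $s\ge t_0$. Even reflection at $t_0+\tau$ followed by extension by zero is an extension operator on zero-trace functions with norm at most $2^{1/p}$, uniformly in $\tau$.

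Your route makes the proof self-contained and gives an explicit constant. It also shows exactly where the $\tau$-independence comes from, which is the one nontrivial point and the one the paper delegates to the literature. The paper's version is shorter but rests entirely on the cited results.

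One small remark on your ``main obstacle'' paragraph: in your framework, subtracting $u_h$ is convenient but not essential. Reflect $u$ itself at $t_0+\tau$ and then continue with $F(\cdot-t_0-2\tau)$; the two pieces match at $t_0+2\tau$, where both equal $u(t_0)$. This already extends $u$ to $[t_0,\infty)$ with norm at most $2^{1/p}\norm{u}_{E_1([t_0,t_0+\tau])}+\norm{F}_{E_1([0,\infty))}$, and a single translation then finishes the proof.
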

\begin{proof}
    Take $u \in E_1([t_0, t_0+ t])$. By \cite[Theorem 4.10.2]{Amann_book_vol_1}, we  know that $u(t_0) \in X_{p}$. Using \cite[Corollary 1.14.(i)]{interpolation_theory}, we find $u_1 \in E_1([t_0, \infty))$  such that $u_1(t_0)=u(t_0)$. Let us consider $v= u-u_1$. It is clear that $v(t_0)=0$, Therefore, there exists $c_0 >0$ (independent of $t$ (see \cite[Proposition 6.1]{pruss_saal_simonett})) such that
   \begin{equation}
       \sup_{t \in [t_0, t_0+t]} \norm{ v(t) }_{X_p} \leq c_0\norm{ v }_{E_1([t_0, t_0 + t])}.
   \end{equation} 
Now, by the triangle inequality, we get \begin{equation}\label{triangle_inequality}
       \sup_{t \in [t_0, t_0+t]} \norm{ u(t) }_{X_p} \leq c_0\left(\norm{ u }_{E_1([t_0, t_0 + t])} + \norm{ u_1 }_{E_1([t_0, t_0 + t])}\right) +  \sup_{t \in [t_0, t_0+t]} \norm{ u_1(t) }_{X_p}.
   \end{equation}  
We know from \cite[Corollary 1.14.(ii)]{interpolation_theory} that 
\begin{equation}\label{embedding_corollary}
\sup_{t \in [t_0, t_0+t]} \norm{ u_1(t) }_{X_p} \leq \sup_{t \in [t_0, \infty]} \norm{ u_1(t) }_{X_p} \leq \frac{p}{p-1}\norm{ u_1 }_{E_1([t_0, \infty))}.    
\end{equation}
Combining \eqref{triangle_inequality} and \eqref{embedding_corollary}, we get that
\begin{equation}\label{before_sup}
\begin{aligned}
    \sup_{t \in [t_0, t_0+t]} \norm{ u(t) }_{X_p}
    \leq c_0\left(\norm{ u }_{E_1([t_0, t_0 + t])} + \norm{ u_1 }_{E_1([t_0, t_0 + t])}\right) + \frac{p}{p-1}\norm{ u_1 }_{E_1([t_0, \infty))}.
    \end{aligned}
\end{equation}
Therefore, by taking the infimum over all \( u_1 \in E_1([t_0, \infty))\) in~\eqref{before_sup} and applying the definition of the \( X_p \)-norm from~\eqref{def_X_p}, we obtain
\begin{equation}\label{after_sup}
\begin{aligned}
    \sup_{t \in [t_0, t_0+t]} \norm{ u(t) }_{X_p}
    \leq c_0\norm{ u }_{E_1([t_0, t_0 + t])} + c_0\norm{ u(t_0)}_{X_{p}} + \frac{p}{p-1}\norm{ u(t_0)}_{X_{p}} 
    \end{aligned}
\end{equation}
Finally, choosing \( c_1 = c_0 + \frac{p}{p-1} \), the theorem follows.
\end{proof}

\section*{Acknowledgments}
The first and third authors gratefully acknowledge the support of the Natural Sciences and Engineering Research Council of Canada (NSERC) through the Discovery Grants RGPIN-2022-04330 and RGPIN-2021-03129.

The third author is a member of the ``Gruppo Nazionale per l'Analisi Matematica, la Probabilit\`a e le loro Applicazioni'' (GNAMPA) of the Istituto Nazionale di Alta Matematica ``Francesco Severi'' (INdAM).

The second and third authors would like to express their gratitude to Gieri Simonett for the enlightening discussions and valuable insights that significantly contributed to this work.

\bibliographystyle{plain}
\bibliography{main}

\end{document}